\newcolumntype{P}[1]{>{\raggedright}p{#1}}
\newcolumntype{M}[1]{>{\raggedright}m{#1}}
\numberwithin{equation}{section}
\newcommand{\norm}[1]{\left\Vert#1\right\Vert}
\newtheorem{theorem}{Theorem}[section]
\newtheorem{lemma}{Lemma}[section]
\newtheorem{remark}{Remark}[section]
\newtheorem{algorithm}{Algorithm}[section]
\newcommand{\bu}{{u}}
\newcommand{\bv}{{v}}
\newcommand{\bw}{{w}}
\newcommand{\bx}{{x}}
\newcommand{\be}{{e}}
\newcommand{\bvarphi}{{\boldsymbol \varphi}}
\newcommand{\bX}{{X}}
\newcommand{\bV}{{V}}
\newcommand{\bphi}{{\boldsymbol \phi}}
\newcommand{\bfeta}{{\boldsymbol \eta}}
\newcommand{\bLambda}{{\boldsymbol \Lambda}}
\def \bX{{\mathbf X}}
\def \bV{{\mathbf V}}
\def \bH{{\mathbf H}}
\def \bu{{\mathbf u}}
\def \bv{{\mathbf v}}
\def \bV{{\mathbf V}}
\def \bw{{\mathbf w}}
\def \be{{\mathbf e}}
\def \bf{{\mathbf f}}
\def \b0{{\mathbf 0}}
\def \bx{{\mathbf x}}
\def \bP{{\mathbf P}}
\begin{document}
\title{Modular grad-div stabilization for the incompressible non-isothermal fluid flows}

\author{Mine Akbas \footnote{Department of Mathematics, Duzce University, 81620, Duzce, Turkey; mineakbas@duzce.edu.tr. } 
\hspace{0.2in}
Leo G. Rebholz\footnote{Department of Mathematical Sciences, Clemson University, Clemson, SC 29634; rebholz@clemson.edu;
This work was partially supported by NSF grant DMS1522191.}
\hspace{.2in}
}

\date{}
\maketitle

\begin{abstract}
This paper considers a modular grad-div stabilization method for approximating solutions of the time-dependent Boussinesq model of non-isothermal flows. The proposed method adds a minimally intrusive step to an existing Boussinesq code, with the key idea being that the penalization of the divergence errors, is only in the extra step (i.e. nothing is added to the original equations). The paper provides a full mathematical analysis by proving unconditional stability and optimal convergence of the methods considered. Numerical experiments confirm theoretical findings, and show that the algorithms have a similar positive effect as the usual grad-div stabilization. 
\end{abstract} 
\section{Introduction}
Classical conforming finite element discretizations for incompressible flows relax the divergence constraint, and enforce it only weakly. While this enables one to construct inf-sup stable discretizations, weak enforcement leads to errors depending on the continuous pressure scaled by the Reynolds number, and creates inaccurate computed solutions for many flow problems, including Boussinesq flows \cite{GLCL80, PFC89, DGT94, bercovier2, F98, GLRW12},
potential and generalized Beltrami flows \cite{LM16, LM162, JLMNR17}, quasi-geostrophic flows \cite{TC12, CT14, LM16}, and two-phase flows with surface tension \cite{GMT07, LMT16}. \\ 
\textcolor{red}{Techniques to overcome this issue include using divergence-free elements or grad-div stabilization.  Using divergence-free elements, such as Scott-Vogelius elements (see \cite{GS19,JLMNR17} and references therein) eliminates the effect of the continuous pressure on the velocity error.  However, using divergence-free elements, in particular on quadrilateral meshes and/or in 3D, may be difficult to implement due to mesh restrictions, high polynomial degrees, and not being built into most major finite element software packages.  Moreover, in legacy codes, such changes may be impossible without a full rewrite. Grad-div stabilization has been recently studied from both theoretical and computational points of view \cite{OR04, OLHL09, LRW11, JJLR14, BR15}, and the studies show that it improves the accuracy of the approximate solutions for the Stokes/Navier-Stokes and related coupled multiphysics problems by reducing the effect of the continuous pressure on the velocity error \cite{DGT94, TV96, O02, OR04, LMNOR09, OLHL09, JK10, MNOR11, GLRW12}. While easier to implement in legacy codes compared to changing to divergence-free elements, there are also disadvantages: this stabilization increases coupling in the linear system, and leads to linear algebraic systems often more difficult to solve since the matrix contribution to the velocity block is singular.}\\  
Recently, a variant of grad-div stabilization was introduced for the incompressible NSE in \cite{FLR18}, which is more attractive from an implementation standpoint. The proposed algorithm in \cite{FLR18} adds a minimally intrusive module which is used after each time step in a Navier-Stokes solver. This extra step implements the first order grad-div stabilization separately, and penalizes the divergence of the velocity error, both in $L^2$ and $L^\infty$-norms. Hence, the algorithm retains benefits of classical grad-div stabilization, but adds resistance to solver breakdown as the stabilization parameters increase. The application of the grad-div step for any multistep time discretization can be found in \cite{F18} where the numerical scheme which uses second order grad-div step was analyzed and performed for the NSE.\\
The purpose of this paper is to extend these novel ideas from \cite{FLR18} to incompressible non-isothermal fluid flows governed by the Boussinesq equations. The numerical scheme for the Boussinesq equations consists of two steps. The first step approximates the usual Boussinesq equations with the backward Euler temporal and finite element spatial discretizations. The second step is a post processing step, and introduces a decoupled, first order grad-div stabilization step for the velocity. The novelty of this algorithm is that grad-div step is decoupled from evolution equations, and hence it can be easily used with an existing code. Moreover, since the grad-div step is separate, the penalization can happen without the negative effects on the saddle point system which can occur when parameters are bigger than one.  This paper studies the stability and convergence properties of the proposed method, and provides numerical experiments to illustrate its reliability and effectiveness.\\ 
This paper is arranged as follows. Section 2 gathers necessary notation and mathematical preliminaries. Section 3 introduces a modular grad-div stabilization method for the incompressible Boussinesq equations, and studies its stability and convergence. It also presents some numerical experiments to test the effectiveness and reliability of the method. The last section summarizes the results of the paper. 
\section{Mathematical Preliminaries} 
This section introduces mathematical preliminaries and notation. We assume that $\Omega$ in $\mathbb{R}^d \, (d=2,3)$ is a polygonal or polyhedral domain with the boundary $\partial \Omega$. Standard notation of Lebesgue and Sobolev spaces are used throughout this paper. The inner product in $(L^2(\Omega))^d$ is denoted by $(\cdot,\cdot)$, the norm in
$(L^2(\Omega))^d$ by $\|\cdot\|$ and the norm in the Hilbert space $(H^k(\Omega))^d$  by $\|\cdot\|_k$. For $X$ being a normed function space in $\Omega$, $L^p (0, T;X)$ is the space of all functions defined on $(0,T)\times\Omega$ for which the norm {is bounded}
$$\|u\|_{L^p(0,T;X)} :=\int\limits_{0}^T \|u\|_{X}^p \mathrm{d}\bx,\hspace{2mm}p\in [1,\infty).$$
For $p =\infty $, the usual modification is used in the definition of this space. 
We consider the classical function spaces
\begin{eqnarray*}
\bX: &=& (H_0^1(\Omega))^d :=\{\bv\in (L^2(\Omega))^d:\nabla \bv \in L^2(\Omega)^{d\times d}, \, \bv= \bm 0\,\hspace{2mm}\mbox{on}\,\partial \Omega\},
\nonumber
\\
Q:&=& L_0^2(\Omega) := \{q \in L^2(\Omega): \int_{\Omega} q \ dx = 0\}, \nonumber\\
W:&=& H_0^1(\Omega).
\end{eqnarray*}
For $\textbf{f}$ an element in the dual space of $\bX$, its norm is defined by
$$\|\textbf{f}\|_{-1}:=\sup\limits_{\bv\in \bX}\frac{| (\textbf{f}, \bv)|}{\|\bv\|_{L^2}}.$$
In this setting, we have the Poincar\'e-Friedrichs' inequality: $\forall v \in W$
$$
\norm{v}_{L^2} \leq C_P \norm{\nabla v}_{L^2},
$$
where $C_P$ is a constant depending only on the size of $\Omega$ \cite{Laytonbook}. We define the trilinear forms: $$ b(\bu,\bv,\bm w) := \frac{1}{2}\left( \left(\bu\cdot\nabla \bv, \bm w\right)-\left(\bu\cdot\nabla \bw, \bv\right) \right),\hspace{2mm}\forall \bu, \bv,\bw  \in \bX,$$
$$ b^*(\bu,\varphi,  \psi) := \frac{1}{2}\left( \left(\bu\cdot\nabla \varphi, \psi\right)-\left(\bu\cdot\nabla \psi, \varphi\right) \right),\hspace{2mm}\forall\bu\in\bX,\hspace{1mm} \text{and} \hspace{1mm}  \forall\varphi, \psi\in W.$$
The discrete time analysis needs the following norms: for $1 \leq k < \infty$
$$ \||v^{n}|\|_{\infty,k}:=\max\limits_{1\leq n \leq N}\|v^{n}\|_{k}, \hspace{1mm}\||v^{n}|\|_{p,k}:=\left( \Delta t\sum_{n=0}^{N-1}\|v^{n}\|_{k}^{p}\right)^{1/p}.$$
The following lemma is necessary to bound the trilinear terms in the  analysis.
\begin{lemma}\label{skewlemma}
There exists a constant $C$ such that for all $\bu,\bv,\bm w\in \bX$
\begin{align*}
b(\bu,\bv,\bm w)& \leq C \|\nabla \bu\|_{L^2}\|\nabla \bv\|_{L^2}\|\nabla \bm w\|_{L^2},\\
b(\bu,\bv,\bm w)& \leq C \sqrt{\|\bu\|_{L^2}\|\nabla \bu\|_{L^2}}\|\nabla \bv\|_{L^2}\|\nabla \bm w\|_{L^2}.
\end{align*}
\end{lemma}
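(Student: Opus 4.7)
The plan is to bound each of the two bilinear pieces making up $b(\bu,\bv,\bw)=\tfrac12\big((\bu\cdot\nabla\bv,\bw)-(\bu\cdot\nabla\bw,\bv)\big)$ via a standard Hölder split, then convert the resulting Lebesgue norms into Sobolev norms using the embedding $H^1(\Omega)\hookrightarrow L^p(\Omega)$ (valid for $p\leq 6$ when $d=3$ and $p<\infty$ when $d=2$) together with the Poincaré–Friedrichs inequality available on $\bX$. Since the two terms in $b$ are structurally symmetric in $\bv$ and $\bw$, it suffices to bound $(\bu\cdot\nabla\bv,\bw)$ and invoke the same argument on $(\bu\cdot\nabla\bw,\bv)$.

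For the first bound, I would apply Hölder's inequality with exponents $(4,2,4)$ to obtain
\[
|(\bu\cdot\nabla\bv,\bw)|\leq \|\bu\|_{L^4}\|\nabla\bv\|_{L^2}\|\bw\|_{L^4},
\]
and then use the Sobolev embedding $H^1\hookrightarrow L^4$ (which holds in both $d=2$ and $d=3$) together with Poincaré–Friedrichs to replace full $H^1$ norms by the gradient seminorm on $\bX$. This yields the cubic gradient product and hence the first inequality.

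For the second bound, the key is to move from the symmetric choice $(4,2,4)$ to an asymmetric Hölder triple that lets me interpolate only on the first factor $\bu$. I would use $(3,2,6)$ in $d=3$, giving
\[
|(\bu\cdot\nabla\bv,\bw)|\leq \|\bu\|_{L^3}\|\nabla\bv\|_{L^2}\|\bw\|_{L^6},
\]
then bound $\|\bw\|_{L^6}$ by the embedding $H^1\hookrightarrow L^6$ and Poincaré, and apply the Gagliardo–Nirenberg interpolation $\|\bu\|_{L^3}\leq C\|\bu\|_{L^2}^{1/2}\|\nabla\bu\|_{L^2}^{1/2}$. In $d=2$ the same scaling comes out of the Ladyzhenskaya inequality $\|\bu\|_{L^4}\leq C\|\bu\|_{L^2}^{1/2}\|\nabla\bu\|_{L^2}^{1/2}$ applied with the symmetric $(4,2,4)$ split, where the extra $\|\bw\|_{L^4}$ factor is handled by Ladyzhenskaya once on $\bw$ and Poincaré once. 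Either way, the resulting product is $C\|\bu\|_{L^2}^{1/2}\|\nabla\bu\|_{L^2}^{1/2}\|\nabla\bv\|_{L^2}\|\nabla\bw\|_{L^2}$, which is the claimed bound.

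The only mildly delicate point is that the desired $\sqrt{\|\bu\|_{L^2}\|\nabla\bu\|_{L^2}}$ scaling on $\bu$ forces different Hölder exponents in the two spatial dimensions, so one must choose the interpolation inequality (Ladyzhenskaya in 2D vs.\ Gagliardo–Nirenberg into $L^3$ in 3D) so that the total Hölder exponents sum to $1$ and the $H^1$ embeddings used on $\bv$ and $\bw$ are admissible. Once the correct exponents are fixed, everything else is a direct chain of Hölder, Sobolev embedding, and Poincaré, with no hidden difficulty.
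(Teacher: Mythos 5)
Your argument is correct and is exactly the chain the paper invokes (its proof is a one-line citation of H\"older, interpolation, Sobolev embedding, and Poincar\'e--Friedrichs from Layton's book); you have simply supplied the details, with the right exponent choices in both $d=2$ and $d=3$. No further comment is needed.
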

\begin{proof}
Application of  H\"older's inequality, interpolation theorem, the Sobolev embedding theorem and
Poincar\'e-Friedrichs' inequality yield the result, see \cite{Laytonbook}.
\end{proof}
For a spatial discretization, we consider a conforming finite element spaces ${\bX}_h \subset \bX, Q_h\subset Q, Y_h\subset W$ defined on a regular triangulation $\mathcal{T}_h$ of the domain $\Omega$ with
maximum diameter $h$. For the stability of the pressure, $({\bX}_h , Q_h)$ is assumed to satisfy the discrete inf-sup condition: there is a constant $%
\alpha$ independent of the mesh size $h$ such that
\begin{eqnarray}
\inf_{q_h\in{Q}_h}\sup_{\bv_h\in {\bX}_h}\frac{(q_h,\,\nabla\cdot \bv_h)}{||\nabla
\bv_h\,||_{L^2}\,||\,q_h\,||_{_{L^2}}}\geq \alpha > 0.  \label{infsup}
\end{eqnarray}
We also assume that the finite element spaces $({\bX}_h,  Q_h, Y_h)$, satisfy approximation properties of piecewise polynomials of local degree $k, \,  k-1$, and $k$, respectively,
\begin{align}
\inf_{\bv_{h} \in \bX_{h}} \left \{\| {\bu-\bv_{h}}\|_{L^2} + h \| {\nabla
(\bu-\bv_{h})}\|_{L^2} \right\} &\leq C h^{k+1} \|{\bu}\|_{k+1},
\label{app1} \\
\inf_{q_{h} \in Q_{h}}\| {p-q_{h}}\|_{L^2} &\leq C h^{k}\|p\|_{k},  \label{app3} \\
\inf_{\theta_{h} \in {Y}_{h}} \left \{\| {\theta-\theta_{h}}\|_{_{L^2}} + h \| {\nabla
(\theta-\theta_{h})}\|_{L^2} \right\} &\leq C h^{k+1}\|\theta\|_{k+1}.
\label{app2}
\end{align}
The discretely divergence-free subspace of $\mathbf{X}_h$ is defined by:
\begin{equation*}
{\bV}_h := \{\bv_h\in {\bX}_h: (q_h,\nabla \cdot \bv_h) = 0, \forall
q_h\in Q_h\}.
\end{equation*}
It is known that under the inf-sup condition (\ref{infsup}), the discretely
divergence-free subspace $\mathbf{V}_h$ has the same approximation
properties as $\bX_h$ \cite{BS08}: 
$$\inf_{\bv_{h} \in \bV_{h}}  \| {\nabla
(\bu-\bv_{h})}\|_{L^2} \leq C({\alpha})\inf_{\bv_{h} \in \bX_{h}}\| {\nabla
(\bu-\bv_{h})}\|_{L^2} .$$ 
Our finite element analysis needs the standard inverse inequality: for any $\bv\in \bX_h,$ 
$$\|\nabla\bv\|_{L^2} \leq C_{inv}h^{-1}\|\bv\|_{L^2},$$
where $C_{inv}$ depends on the minimum angle in the triangulation.\\
\textcolor{red}{The important lemma necessary for our convergence analysis is Agmon's Inequality, which uses two interpolation inequalities between the Lebesgue space $L^{\infty}(\Omega)$ and the Sobolev spaces $H^2(\Omega)$ :
\begin{lemma}
Let $\Phi \in H^2(\Omega)\cap H_0^1(\Omega)$, $\Omega \subset \mathbb{R}^d, \,\, d=2,3$. Then there exists a constant $C$ such that 
\begin{align}
\|\Phi\|_{L^\infty}\leq C \|\Phi\|_{H^1}^{1/2}\|\Phi\|_{H^2}^{1/2}.
\end{align}
\end{lemma}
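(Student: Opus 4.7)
The strategy is classical: reduce to Fourier analysis on the whole space, then split the Fourier integral at a judiciously chosen frequency and optimize. First I would use a Stein-type extension operator (available since $\Omega$ is a Lipschitz, in fact polygonal/polyhedral, domain) to obtain $\tilde\Phi\in H^2(\mathbb{R}^d)$ with $\tilde\Phi\mid_\Omega=\Phi$ and $\|\tilde\Phi\|_{H^k(\mathbb{R}^d)}\le C\|\Phi\|_{H^k(\Omega)}$ for $k=0,1,2$. Since $\|\Phi\|_{L^\infty(\Omega)}\le\|\tilde\Phi\|_{L^\infty(\mathbb{R}^d)}$, it suffices to establish the inequality on $\mathbb{R}^d$.

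By density I may assume $\tilde\Phi\in\mathcal{S}(\mathbb{R}^d)$, so that Fourier inversion gives the pointwise bound
$$|\tilde\Phi(x)|\le (2\pi)^{-d/2}\int_{\mathbb{R}^d}|\hat{\tilde\Phi}(\xi)|\,d\xi.$$
For a parameter $R>0$ to be chosen, I would split this integral into the low-frequency piece $|\xi|\le R$ and the high-frequency piece $|\xi|>R$, and apply Cauchy--Schwarz on each piece with the weights $(1+|\xi|^2)^{-1/2}$ and $(1+|\xi|^2)^{-1}$ respectively. This yields
$$\int_{|\xi|\le R}|\hat{\tilde\Phi}|\,d\xi\le\Bigl(\int_{|\xi|\le R}\!(1+|\xi|^2)^{-1}\,d\xi\Bigr)^{1/2}\|\tilde\Phi\|_{H^1},\qquad \int_{|\xi|>R}|\hat{\tilde\Phi}|\,d\xi\le\Bigl(\int_{|\xi|>R}\!(1+|\xi|^2)^{-2}\,d\xi\Bigr)^{1/2}\|\tilde\Phi\|_{H^2}.$$

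In dimension $d=3$, passing to polar coordinates gives the bounds $\int_{|\xi|\le R}(1+|\xi|^2)^{-1}d\xi\le CR$ and $\int_{|\xi|>R}(1+|\xi|^2)^{-2}d\xi\le CR^{-1}$, so that
$\|\tilde\Phi\|_\infty\le C\bigl(R^{1/2}\|\Phi\|_{H^1}+R^{-1/2}\|\Phi\|_{H^2}\bigr).$
Choosing $R=\|\Phi\|_{H^2}/\|\Phi\|_{H^1}$ balances the two terms and produces exactly $\|\Phi\|_\infty\le C\|\Phi\|_{H^1}^{1/2}\|\Phi\|_{H^2}^{1/2}$, as desired.

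\textbf{Main obstacle.} The genuinely delicate step is the two-dimensional case. In $d=2$ the low-frequency integral is only logarithmic, $\int_{|\xi|\le R}(1+|\xi|^2)^{-1}d\xi\sim\log(1+R^2)$, while the high-frequency integral is of order $R^{-2}$, so the naive optimization over $R$ yields the Brezis--Gallou\"et bound $\|\Phi\|_\infty\le C\|\Phi\|_{H^1}(1+\log(\|\Phi\|_{H^2}/\|\Phi\|_{H^1}))^{1/2}$ rather than a clean power-$\tfrac12$ interpolation. To recover the stated square-root form one uses $(1+\log t)^{1/2}\le C\,t^{1/2}$ for $t\ge 1$, combined with the Poincar\'e inequality on the bounded domain $\Omega$, which guarantees $\|\Phi\|_{H^2}\ge c\|\Phi\|_{H^1}$ and hence that the logarithm is safely dominated. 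With this observation the 2D estimate reduces to the same form as the 3D one, completing the proof.
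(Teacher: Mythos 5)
Your proof is correct. Note that the paper does not actually prove this lemma at all; it is stated as a known result (Agmon's inequality), so there is no in-paper argument to compare against. Your Fourier-splitting argument is the standard one and all steps check out: the Stein extension is legitimate for a polygonal/polyhedral (hence Lipschitz) domain, the weighted Cauchy--Schwarz estimates on the low- and high-frequency pieces are right, and in $d=3$ the choice $R=\|\Phi\|_{H^2}/\|\Phi\|_{H^1}$ gives exactly the stated bound. Your handling of $d=2$ is also sound: the Brezis--Gallou\"et logarithmic bound degrades to the stated power-$\tfrac12$ form via $(1+\log t)^{1/2}\le t^{1/2}$ for $t\ge1$. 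One small simplification: you do not need Poincar\'e to ensure $\|\Phi\|_{H^2}\ge\|\Phi\|_{H^1}$ --- this holds with constant $1$ directly from the definition of the Sobolev norms, so the ratio $t$ is automatically at least $1$. (For what it is worth, in $d=2$ the sharper classical form $\|\Phi\|_{L^\infty}\le C\|\Phi\|_{L^2}^{1/2}\|\Phi\|_{H^2}^{1/2}$ is available by the same splitting with unweighted Cauchy--Schwarz on the low frequencies, and it implies the stated inequality immediately; but your route is perfectly adequate for what the paper needs.)
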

}
In our convergence analysis, we need a different version of the usual discrete Gronwall's Lemma in literature, see e.g.,\cite{HR90}:
\begin{lemma}[Discrete Gronwall's Lemma]
\label{DiscreteGronwall} Let $\Delta t$, $B$ and $a_n, b_n, c_n, d_n$ be
finite non-negative numbers such that
\begin{eqnarray*}
a_{N}+\Delta t\sum_{n=0}^{N}b_n\leq\Delta t\sum_{n=0}^{N-1}d_{n}a_{n}+\Delta
t\sum_{n=0}^{N}c_{n}+B \quad\mbox{for}\quad N\geq 1.
\end{eqnarray*}
Then for all $\Delta t>0$,
\begin{eqnarray*}
a_{N}+\Delta t\sum_{n=0}^{N}b_n\leq exp\bigg(\Delta t\sum_{n=0}^{N-1}d_{n}%
\bigg)\bigg(\Delta t\sum_{n=0}^{N}c_{n}+B\bigg)\quad\mbox{for}\quad N\geq 1.
\end{eqnarray*}
\end{lemma}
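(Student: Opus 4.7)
The plan is to prove the bound by induction on $N$, after first dropping the non-negative contribution $\Delta t\sum_{n=0}^{N}b_n$ from the left-hand side, which only strengthens the desired conclusion. For convenience I introduce the abbreviations
$$E_N := \exp\!\left(\Delta t\sum_{n=0}^{N-1} d_n\right), \qquad S_N := \Delta t\sum_{n=0}^{N} c_n + B,$$
with $E_0 := 1$, so that the target inequality reads $a_N \leq E_N S_N$, and the hypothesis reduces to $a_N \leq \Delta t\sum_{n=0}^{N-1} d_n a_n + S_N$.

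The first technical step I would carry out is the telescoping identity $E_N \geq 1 + \Delta t \sum_{n=0}^{N-1} d_n E_n$. This follows because $E_{n+1} = E_n \exp(\Delta t \, d_n) \geq E_n(1 + \Delta t \, d_n)$ by the convexity inequality $e^x \geq 1+x$, so $E_{n+1} - E_n \geq \Delta t \, d_n E_n$; summing on $n=0,\dots,N-1$ yields $E_N - 1 \geq \Delta t \sum_{n=0}^{N-1} d_n E_n$. Note that this is where the $\Delta t$-free character of the bound originates: because no term $d_N a_N$ appears on the right, no smallness condition of the form $\Delta t \, d_N < 1$ need be imposed, which is precisely what distinguishes this version from the standard one cited in \cite{HR90}.

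For the inductive step, I assume $a_n \leq E_n S_n$ for every $n \leq N-1$. Substituting into the hypothesis and using the monotonicity $S_n \leq S_N$ (which is valid since $c_n \geq 0$) gives
$$a_N \;\leq\; \Delta t \sum_{n=0}^{N-1} d_n E_n S_n + S_N \;\leq\; S_N\!\left(1 + \Delta t \sum_{n=0}^{N-1} d_n E_n\right) \;\leq\; S_N E_N,$$
which is exactly the claim. The main obstacle, and really the only point requiring care, is the base case $N=1$: the hypothesis by itself gives only $a_1 \leq \Delta t \, d_0 a_0 + S_1$, so an a priori bound on $a_0$ is needed to conclude $a_1 \leq E_1 S_1$. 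The standard convention, implicit in the statement, is that $a_0$ is subsumed into $B$ (so $a_0 \leq B \leq S_0 \leq S_1$), after which the induction closes without further difficulty.
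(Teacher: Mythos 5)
The paper does not actually prove this lemma; it is quoted from the literature (the reference \cite{HR90}), so there is no in-paper argument to compare against. Your induction is the standard proof of the unconditional (``no $\Delta t$ restriction'') discrete Gronwall inequality, and its core --- the telescoping bound $E_N \ge 1 + \Delta t\sum_{n=0}^{N-1} d_n E_n$ obtained from $e^x \ge 1+x$, followed by substitution of the inductive hypothesis and the monotonicity $S_n \le S_N$ --- is correct. You also correctly identify the distinguishing feature of this version: since $a_N$ does not appear on the right-hand side of the hypothesis, no smallness condition of the form $\Delta t\, d_n < 1$ is needed, in contrast to the classical Heywood--Rannacher statement.

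Two remarks. First, your opening claim that dropping $\Delta t\sum_{n=0}^{N} b_n$ from the left-hand side ``only strengthens the desired conclusion'' is backwards: discarding a non-negative term from the left \emph{weakens} the conclusion, so proving $a_N \le E_N S_N$ alone does not yield the stated bound. The repair is immediate, however: your induction only ever uses $a_n \le E_n S_n$ for $n \le N-1$, so in the final chain you may start from the full hypothesis $a_N + \Delta t\sum_{n=0}^{N} b_n \le \Delta t\sum_{n=0}^{N-1} d_n a_n + S_N$ and carry the $b_n$ sum through unchanged, arriving at $a_N + \Delta t\sum_{n=0}^{N} b_n \le E_N S_N$. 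Second, the base-case difficulty you flag is genuine: as literally stated, with the hypothesis holding only for $N \ge 1$, the $N=1$ inequality involves $a_0$ on the right with no bound available, so some additional input is required. The cleanest fix is to require the hypothesis for $N \ge 0$ as well (the empty sum then gives $a_0 \le S_0$); your convention $a_0 \le B$ accomplishes the same thing. With these two adjustments the argument is complete.
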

\section{First order modular grad-div stabilization for the Boussinesq equations}
This section presents a modular grad-div method based on backward-Euler time and finite element spatial discretizations for the incompressible Boussinesq equations, and gives its stability and convergence results.\\
Incompressible, non-isothermal fluid flows are governed by the incompressible Navier-Stokes equations (NSE) and heat transport equation, and read as: for a given force field ${\bf}:(0, T]\times \Omega\rightarrow \mathbb{R}^d$, find a velocity field $\bu:(0, T]\times \Omega\rightarrow \mathbb{R}^d$, and pressure and temperature fields $p,\, \theta:(0, T]\times \Omega\rightarrow \mathbb{R}$ such that $(\bu,\, p, \, \theta)$ satisfies the equations 
\begin{equation}
\begin{split}
\frac{\partial {\bu}}{\partial t} - \nu \Delta {\bu} +
\bm u \cdot  \nabla{\bu} + \nabla p &=
Ri \langle \bm 0,\theta \rangle + {\bf},\hspace{4mm}\text{in}\hspace{2mm}(0, T]\times \Omega,\\
{\nabla\cdot\bu} &= 0, \,\hspace{1mm}\quad\text{in}\hspace{2mm}(0, T]\times \Omega,\\
\frac{\partial {\theta}}{\partial t} - \kappa \Delta {\theta} +
(\bu\cdot\nabla){\theta} & = \Psi, \,\hspace{1mm}\quad\text{in}\hspace{2mm}(0, T]\times \Omega,
\end{split}
\label{Bouss}
\end{equation}
with appropriate boundary and initial conditions. The problem is posed on a bounded domain with Lipschitz continuous boundary. Here, $\nu:=Re^{-1}$ is the dimensionless kinematic viscosity, where $Re$ denotes the Reynolds number, $Ri:=Gr/Re^2$ is the Richardson number which accounts for the gravitational force and the thermal expansion of the fluid, and $\kappa:=1/(Pr Re)$ is thermal diffusivity coefficient. The Rayleigh number is defined by $Ra=Ri·Re^2·Pr$, and higher Ra leads to more complex physics as well as more difficulties in numerically solving the system. The modular grad-div stabilization method is given as follows:
\begin{algorithm}\label{algBouss} Let body forces $\bf , \Psi$, initial velocity $\bu_0$ and temperature $ \theta_0$, {and the stabilization parameters $\gamma\geq 0$, $\beta\geq 0$} be given. Set $\bu_h^0$, and  $\theta_h^{\, 0}$ to be $L^2$-orthogonal projection of $\,\,\bu_0$ into $\bX_h$, and $ \theta_{\,0}$ in $Y_h$, respectively. Select an end time $T$, and a time step $\Delta t>0$ such that $T/\Delta t = N$. Then find $\left(\bu_h^{n+1}, p_h^{n+1}, \theta_h^{\, n+1}\right)\in (\bX_h, Q_h, Y_h), \,$  $(n=0,1,2,..., N-1)$,  via the following :\ \\ \ \\ 
\textbf{Step 1:} Compute $\left(\widetilde{\bu}_h^{n+1}, p_h^{n+1}, \widetilde{\theta}_h^{n+1}\right)\in (\bX_h, Q_h, Y_h)$ such that for each $\left(\bv_h, q_h, \chi_h\right)\in (\bX_h, Q_h, Y_h)$
\begin{align}
\frac{1}{\Delta t}\big(\widetilde{\bu}_h^{n+1}- \bu_h^{n}, \bv_h\big)  + \nu\big(\nabla\widetilde{\bu}_h^{n+1}, \nabla \bv_h\big) + b\,(\bu_h^{n}, \, \widetilde{\bu}_h^{\,n+1}, \,\bv_h)  - (p_h^{n+1}, \nabla\cdot \bv_h)\nonumber\\
 = Ri \left(\langle 0,  \,\widetilde{\theta}_h^{\,n} \rangle, \,\bv_h\right) + ({{\bf}}^{n+1}, \bv_h),\label{bousDisc1}\\
( \nabla\cdot\widetilde{\bu}_h^{n+1},\, q_h)  = 0,\label{bousDisc2}\\
\frac{1}{\Delta t}\big(\widetilde{\theta}_h^{n+1} - \widetilde{\theta}_h^{n}, \chi_h\big) +  \kappa\left(\nabla \widetilde{\theta}_h^{n+1}, \nabla \chi_h\right)  + b^*\,(\bu_h^{n}, \, \widetilde{\theta}_h^{\,n+1}, \,\chi_h) = (\Psi^{n+1}, \,\chi_h)\label{bousDisc3}.
\end{align}
\textbf{Step 2:} Compute $\bu_h^{\,n+1}\in \bX_h$ such that for each $ \bvarphi_h\in \bX_h,$
\begin{align}
\left( {\bu}^{n+1}_h, \bvarphi_h\right) + (\beta + \gamma\Delta t)(\nabla\cdot\bu_h^{n+1}, \nabla\cdot\bvarphi_h)& = (\widetilde{\bu}_h^{n+1}, \bvarphi_h) + \beta (\nabla\cdot\bu_h^n , \nabla\cdot \bvarphi_h)\label{bousDisc4}.
\end{align}
\end{algorithm}

\begin{remark}
We emphasize here that modular grad-div stabilization step can be applied for any multistep time discretization. Numerical analysis for the BDF2 case can be found in \cite{F18}. 
\end{remark}

\section{Stability Analysis}
We now focus on the stability of Algorithm~\ref{algBouss}. Our stability analysis shows that approximate solutions of Algorithm~\ref{algBouss} are stable without any time step restriction. We first present a lemma which gives a relation between solutions of Step 1 and Step 2, and necessary for the stability result.
\begin{lemma}\label{prestab}
Let ${\bu}_h^{n+1}$ be solutions to \eqref{bousDisc4}. Then it holds
\begin{align}
\|\widetilde{\bu}_h^{n+1}\|^2_{L^2}  = \|{\bu}_h^{n+1}\|^2_{L^2} 
	& + \|\widetilde{\bu}_h^{n+1}-{\bu}_h^{n+1}\|^2_{L^2} 
	 + 2\gamma\Delta t \|\nabla \cdot{\bu}_h^{n+1}\|^2_{L^2} \nonumber\\
	& \, \, + \beta \left(\|\nabla\cdot\bu_h^{n+1}\|^2_{L^2} - \|\nabla\cdot\bu_h^{n}\|^2_{L^2} + \|\nabla\cdot\left(\bu_h^{n+1}-\bu_h^n\right)\|^2_{L^2} \right).
\end{align}
\end{lemma}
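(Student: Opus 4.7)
The proof is a direct energy-type calculation on the Step 2 equation, using only the polarization identity $2(a,b) = \|a\|^2 + \|b\|^2 - \|a-b\|^2$. The plan is to test \eqref{bousDisc4} against the natural choice $\bvarphi_h = \bu_h^{n+1}\in\bX_h$, which is admissible since the Step 2 equation holds for every $\bvarphi_h\in\bX_h$. This gives
\[
\|\bu_h^{n+1}\|_{L^2}^2 + (\beta+\gamma\Delta t)\|\nabla\cdot\bu_h^{n+1}\|_{L^2}^2 = (\widetilde{\bu}_h^{n+1},\bu_h^{n+1}) + \beta(\nabla\cdot\bu_h^{n},\nabla\cdot\bu_h^{n+1}).
\]

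Next, I would multiply through by $2$ and apply the polarization identity separately to each inner product on the right. For the $L^2$ inner product this produces
\[
2(\widetilde{\bu}_h^{n+1},\bu_h^{n+1}) = \|\widetilde{\bu}_h^{n+1}\|_{L^2}^2 + \|\bu_h^{n+1}\|_{L^2}^2 - \|\widetilde{\bu}_h^{n+1}-\bu_h^{n+1}\|_{L^2}^2,
\]
and for the divergence inner product
\[
2(\nabla\cdot\bu_h^{n},\nabla\cdot\bu_h^{n+1}) = \|\nabla\cdot\bu_h^{n}\|_{L^2}^2 + \|\nabla\cdot\bu_h^{n+1}\|_{L^2}^2 - \|\nabla\cdot(\bu_h^{n+1}-\bu_h^{n})\|_{L^2}^2.
\]

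The only remaining work is bookkeeping: substitute both identities into the doubled Step 2 relation, move $\|\bu_h^{n+1}\|_{L^2}^2$ and the $\gamma\Delta t$ term to the right-hand side, and collect the $\beta$-terms. The two terms $2\beta\|\nabla\cdot\bu_h^{n+1}\|_{L^2}^2$ on the left and $\beta\|\nabla\cdot\bu_h^{n+1}\|_{L^2}^2$ on the right combine to give the claimed $\beta(\|\nabla\cdot\bu_h^{n+1}\|^2_{L^2} - \|\nabla\cdot\bu_h^{n}\|^2_{L^2} + \|\nabla\cdot(\bu_h^{n+1}-\bu_h^n)\|^2_{L^2})$ structure, and solving for $\|\widetilde{\bu}_h^{n+1}\|_{L^2}^2$ yields the stated identity.

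There is essentially no obstacle here; the only point requiring a little care is the sign bookkeeping when rearranging, since one wants the identity to isolate $\|\widetilde{\bu}_h^{n+1}\|_{L^2}^2$ on the left (rather than the more natural $\|\bu_h^{n+1}\|_{L^2}^2$ that would arise if one tested with $\widetilde{\bu}_h^{n+1}$). Testing with $\bu_h^{n+1}$ rather than $\widetilde{\bu}_h^{n+1}$ is the correct choice because the $\beta$ divergence term on the right of \eqref{bousDisc4} already pairs $\bu_h^n$ with $\bvarphi_h$, so this test function produces the telescoping combination $\|\nabla\cdot\bu_h^{n+1}\|^2-\|\nabla\cdot\bu_h^n\|^2$ needed later for the stability argument.
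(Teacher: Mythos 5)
Your proposal is correct and follows essentially the same route as the paper: test \eqref{bousDisc4} with $\bvarphi_h = \bu_h^{n+1}$, apply the polarization identity to both inner products, and rearrange (the paper multiplies by $2$ at the end rather than the start, which is immaterial). The sign bookkeeping you flag does work out exactly as claimed.
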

\begin{proof}
Set ${\bvarphi_h} = {\bu}_h^{n+1}$ in \eqref{bousDisc4} which yields
\begin{align}
(\widetilde{\bu}_h^{n+1}, {\bu}^{n+1}_h) 
	& = \| {\bu}^{n+1}_h\|^2_{L^2}
	+ (\beta + \gamma\Delta t)\|\nabla\cdot\bu_h^{n+1}\|^2_{L^2}
	- \beta (\nabla\cdot\bu_h^n , \nabla\cdot\bu_h^{n+1})\label{stabBousvel1}.
\end{align}
Apply the polarization identity on the left hand side and on the last right hand side terms to get:
\begin{align*}
(\widetilde{\bu}_h^{n+1}, {\bu}^{n+1}_h)
	& = \frac{1}{2}\bigg(\|\widetilde{\bu}_h^{n+1}\|^2_{L^2} + \|\bu_h^{n+1}\|^2_{L^2} - \|\widetilde{\bu}_h^{n+1} - \bu_h^{n+1}\|^2_{L^2}\bigg),\\
- \beta (\nabla\cdot\bu_h^n , \nabla\cdot\bu_h^{n+1})
	& = -\frac{\beta}{2}\bigg(
\|\nabla\cdot{\bu}_h^{n}\|^2_{L^2} + \|\nabla\cdot\bu_h^{n+1}\|^2_{L^2} - \|\nabla\cdot\left({\bu}_h^{n} - \bu_h^{n+1}\right)\|^2_{L^2}\bigg).
\end{align*} 
Inserting these estimates into \eqref{stabBousvel1}, rearranging terms and multiplying by $2$ gives the desired estimates.
\end{proof}
We now present the main stability result. 
\begin{lemma}
Assume that $\bf \in L^2(0,T; \bH^{-1}(\Omega))$ and $\Psi\in L^2(0,T; H^{-1}(\Omega))$. Then solutions to Algorithm~\ref{algBouss} satisfy the following: for any $\Delta t>0$
\begin{gather}
\|\bu_h^{N}\|^2_{L^2} 
	+ \beta \|\nabla\cdot\bu_h^{N}\|^2_{L^2} 
	+\sum_{n=0}^{N-1}\bigg(\|\widetilde{\bu}_h^{n+1} - \bu_h^{n+1}\|^2_{L^2} +  \|\widetilde{\bu}_h^{n+1} - \bu_h^{n}\|^2_{L^2} \bigg)
	+ \beta \sum_{n=0}^{N-1}\|\nabla\cdot\left({\bu}_h^{n+1} - \bu_h^{n}\right)\|^2_{L^2}\nonumber\\ 
	+ 2\gamma\Delta t\sum_{n=0}^{N-1}\|\nabla\cdot \bu_h^{n+1}\|^2_{L^2} \
	 + \nu\,\Delta t\sum_{n=0}^{N-1}\|\nabla\widetilde{\bu}_h^{n+1}\|^2_{L^2}
	 \leq 2 C_P^2 Ri^2\nu^{-1} T M + 2 \nu^{-1} \Delta t \sum_{n=0}^{N-1}\|\bf^{n+1}\|_{-1}^2,
\end{gather}
and
\begin{gather}
\|\widetilde{\theta}_h^{N}\|^2_{L^2}  
	+\sum_{n=0}^{N-1}\|\widetilde{\theta}_h^{n+1} - \widetilde{\theta}_h^{n}\|^2_{L^2} 
	 + \kappa \,\Delta t\sum_{n=0}^{N-1}\|\nabla\widetilde{\theta}_h^{n+1}\|^2_{L^2} 
	 \leq M,
\end{gather}
where $M:=\left(\|\widetilde{\theta}_h^{0}\|_{L^2}^2 +\kappa^{-1}\Delta t\sum\limits_{n=0}^{N-1}\|\Psi_{h}^{n+1}\|_{-1}^2\right).$
\end{lemma}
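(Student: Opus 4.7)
The plan is to establish the two estimates separately, treating the temperature bound first since it is self-contained, and then using it as input to the velocity argument.

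For the temperature estimate I would test \eqref{bousDisc3} with $\chi_h = \widetilde{\theta}_h^{n+1}$. The skew-symmetry of $b^{*}$ in its last two arguments kills the convective term, the diffusion produces $\kappa\|\nabla\widetilde{\theta}_h^{n+1}\|_{L^2}^{2}$, and the time-difference term becomes $\tfrac{1}{2\Delta t}\bigl(\|\widetilde{\theta}_h^{n+1}\|^{2}-\|\widetilde{\theta}_h^{n}\|^{2}+\|\widetilde{\theta}_h^{n+1}-\widetilde{\theta}_h^{n}\|^{2}\bigr)$ after polarization. On the right-hand side I bound $(\Psi^{n+1},\widetilde{\theta}_h^{n+1})$ via the dual norm and apply Young's inequality with weight $\kappa/2$ to absorb the resulting gradient into the viscous term. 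Multiplying by $2\Delta t$ and summing from $n=0$ to $N-1$ telescopes to the stated bound by $M$. Crucially, running this same argument up to any intermediate $N$ yields the uniform-in-time bound $\|\widetilde{\theta}_h^{n}\|_{L^2}^{2}\le M$, which is what the velocity argument will consume.

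For the velocity estimate I test \eqref{bousDisc1} with $\bv_h = \widetilde{\bu}_h^{n+1}$. The pressure term vanishes by \eqref{bousDisc2} since $\widetilde{\bu}_h^{n+1}$ is discretely divergence-free, and the trilinear term $b(\bu_h^{n},\widetilde{\bu}_h^{n+1},\widetilde{\bu}_h^{n+1})$ vanishes by skew-symmetry. Polarization of the time-difference term produces $\tfrac{1}{2}\bigl(\|\widetilde{\bu}_h^{n+1}\|^{2}-\|\bu_h^{n}\|^{2}+\|\widetilde{\bu}_h^{n+1}-\bu_h^{n}\|^{2}\bigr)$, and the critical step is then to invoke Lemma \ref{prestab} to rewrite $\|\widetilde{\bu}_h^{n+1}\|^{2}$ as $\|\bu_h^{n+1}\|^{2}$ plus the dissipation $\|\widetilde{\bu}_h^{n+1}-\bu_h^{n+1}\|^{2}$, the grad-div production $2\gamma\Delta t\,\|\nabla\cdot\bu_h^{n+1}\|^{2}$, and the $\beta$-telescoping block $\|\nabla\cdot\bu_h^{n+1}\|^{2}-\|\nabla\cdot\bu_h^{n}\|^{2}+\|\nabla\cdot(\bu_h^{n+1}-\bu_h^{n})\|^{2}$. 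This is exactly what reproduces the left-hand side of the claimed inequality.

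The right-hand side is handled by Cauchy–Schwarz and Young. For the buoyancy term I would use Poincar\'e--Friedrichs to pass to the gradient norm,
\[
\left|\,Ri\bigl(\langle 0,\widetilde{\theta}_h^{n}\rangle,\widetilde{\bu}_h^{n+1}\bigr)\right|
\le Ri\,C_P\,\|\widetilde{\theta}_h^{n}\|_{L^2}\,\|\nabla\widetilde{\bu}_h^{n+1}\|_{L^2},
\]
followed by Young's inequality with weight $\nu/4$; the forcing is treated identically with $\|\bff^{n+1}\|_{-1}$ in place of $Ri\,C_P\|\widetilde{\theta}_h^{n}\|$. The two resulting $\tfrac{\nu}{4}\|\nabla\widetilde{\bu}_h^{n+1}\|^{2}$ terms are absorbed by the $\nu\|\nabla\widetilde{\bu}_h^{n+1}\|^{2}$ produced by the diffusion. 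Summing from $n=0$ to $N-1$ telescopes the $\|\bu_h^{n}\|^{2}$ and $\beta\|\nabla\cdot\bu_h^{n}\|^{2}$ contributions, and the residual sum $\Delta t\sum_{n=0}^{N-1}\|\widetilde{\theta}_h^{n}\|_{L^2}^{2}$ from the buoyancy is controlled by $N\Delta t\cdot M = TM$ thanks to the uniform temperature bound. This delivers the $2 C_P^{2}Ri^{2}\nu^{-1}TM$ term.

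The main obstacle is bookkeeping rather than analysis: organizing the polarization identity, Lemma \ref{prestab}, and the $\beta$-telescoping so that all six distinct nonnegative quantities appear on the left with the correct coefficients, while ensuring the buoyancy and forcing contributions are absorbed by a single $\nu\|\nabla\widetilde{\bu}_h^{n+1}\|^{2}$ term. Notably, because the velocity estimate contains no Gronwall-type exponential amplification, Lemma \ref{DiscreteGronwall} is not needed here; it will, however, be required for the convergence analysis that follows.
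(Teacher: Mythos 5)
Your proposal is correct and follows essentially the same route as the paper: the temperature bound is obtained first by testing \eqref{bousDisc3} with (a multiple of) $\widetilde{\theta}_h^{n+1}$ and summing, and the velocity bound by testing \eqref{bousDisc1} with $\widetilde{\bu}_h^{n+1}$, eliminating the pressure via \eqref{bousDisc2}, invoking Lemma~\ref{prestab} to convert $\|\widetilde{\bu}_h^{n+1}\|_{L^2}^2$ into the telescoping and dissipation terms, and controlling the buoyancy via $\Delta t\sum_{n}\|\widetilde{\theta}_h^{n}\|_{L^2}^2\leq TM$. Your Young-inequality calibrations reproduce the stated constants, and you are right that no Gronwall argument is needed here.
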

\begin{proof}
We first prove the temperature stability result. Set $\chi_h=2\Delta t\widetilde{\theta}_h^{n+1}$ in \eqref{bousDisc3}, which vanishes the non-linear term and leaves:
\begin{align*}
\bigg(\|\widetilde{\theta}_h^{n+1}\|^2_{L^2} - \|\widetilde{\theta}_h^{n}\|^2_{L^2} + \|\widetilde{\theta}_h^{n+1}- \widetilde{\theta}_h^{n}\|^2_{L^2}\bigg) 
	+ 2 \,\kappa\, \Delta t\|\nabla\widetilde{\theta}_h^{n+1}\|^2_{L^2}
	 = 2 \, \Delta t (\Psi^{n+1}, \widetilde{\theta}_h^{n+1}).
\end{align*}
Apply the Cauchy-Schwarz and Young's inequalities on the right hand side term to get
\begin{align*}
2 \, \Delta t\, (\Psi^{n+1}, \widetilde{\theta}_h^{n+1})&\leq {\kappa^{-1}}{\Delta t }\|\Psi^{n+1}\|_{-1}^2 + \kappa\, {\Delta t \,}\|\nabla\widetilde{\theta}_h^{n+1}\|^2_{L^2}.
\end{align*}
Inserting this estimate produces
\begin{align}
\bigg(\|\widetilde{\theta}_h^{n+1}\|^2_{L^2} - \|\widetilde{\theta}_h^{n}\|^2_{L^2} + \|\widetilde{\theta}_h^{n+1}- \widetilde{\theta}_h^{n}\|^2_{L^2}\bigg) 
	+  \,\kappa\, \Delta t\|\nabla\widetilde{\theta}_h^{n+1}\|^2_{L^2}
	 \leq \, \kappa^{-1} \Delta t \|\Psi^{n+1}\|_{-1}^2\label{stabforvel}.
\end{align}
Dropping the non-negative third left hand side term and summing over time steps gives the stability bound for the temperature. For the stability of the velocity, set $(\bv_h, q_h)= (2\, \Delta t\,\widetilde{\bu}_h^{n+1}, p_h^{n+1})$ in \eqref{bousDisc1}-\eqref{bousDisc2} to get 
\begin{multline*}
\bigg(\|\widetilde{\bu}_h^{n+1}\|^2_{L^2} - \|{\bu}_h^{n}\|^2_{L^2} + \|\widetilde{\bu}_h^{n+1}- {\bu}_h^{n}\|^2_{L^2}\bigg) 
	+ 2 \,\nu\, \Delta t\|\nabla\widetilde{\bu}_h^{n+1}\|^2_{L^2}
	 = 2\,Ri \Delta t \left(\langle 0,  \,\widetilde{\theta}_h^{\,n} \rangle, \,\bv_h\right) +  2 \, \Delta t (\bf^{n+1}, \widetilde{\bu}_h^{n+1}).
\end{multline*}
Apply the Cauchy-Schwarz and Young's inequalities on the right hand side terms to produce
\begin{align*}
2 \, \Delta t\, (\bf^{n+1}, \widetilde{\bu}_h^{n+1})&\leq {2\,\nu^{-1} \Delta t }\|\bf^{n+1}\|_{-1}^2 + \frac{\nu\, \Delta t \,}{2}\|\nabla\widetilde{\bu}_h^{n+1}\|^2_{L^2},\\
2\,Ri\Delta t \left(\langle 0,  \,\widetilde{\theta}_h^{\,n} \rangle, \,\bv_h\right)
&\leq {2\, C_P^2\, Ri^2\nu^{-1}\Delta t}\|\widetilde{\theta}^{n}_h\|^2_{{L^2}} + \frac{\nu\,\Delta t}{2}\,\|\nabla\widetilde{\bu}_h^{n+1}\|^2_{L^2}.
\end{align*}
Insert these estimates and rearrange terms to obtain
\begin{gather*}
\bigg(\|\widetilde{\bu}_h^{n+1}\|^2_{L^2} - \|{\bu}_h^{n}\|^2_{L^2} + \|\widetilde{\bu}_h^{n+1}- {\bu}_h^{n}\|^2_{L^2}\bigg) 
	+  \,\nu\, \Delta t\|\nabla\widetilde{\bu}_h^{n+1}\|^2_{L^2}
	 \leq {2\, C_P^2\, Ri^2\,\nu^{-1}\Delta t}\|\widetilde{\theta}^{n}_h\|^2_{{L^2}} \, + \, \frac{\Delta t }{\nu}\|\bf^{n+1}\|_{-1}^2.
\end{gather*}
Now use Lemma~\ref{prestab} on the left hand side to obtain
\begin{gather*}
\bigg(\|{\bu}_h^{n+1}\|^2_{L^2} - \|{\bu}_h^{n}\|^2_{L^2} \bigg)	
	+\beta \bigg(\|\nabla\cdot\bu_h^{n+1}\|^2_{L^2} - \|\nabla\cdot\bu_h^{n}\|^2_{L^2}\bigg)
	+  \bigg( \|\widetilde{\bu}_h^{n+1}- {\bu}_h^{n+1}\|^2_{L^2} + \|\widetilde{\bu}_h^{n+1}- {\bu}_h^{n}\|^2_{L^2}\bigg) \\
	+ \beta\|\nabla\cdot\left(\bu_h^{n+1}-\bu_h^n\right)\|^2_{L^2}
	 + 2\,\gamma\,\Delta t\, \|\nabla \cdot{\bu}_h^{n+1}\|^2_{L^2}
	 +  \nu \Delta t\,\|\nabla\widetilde{\bu}_h^{n+1}\|^2_{L^2}
	 \leq 
	 {2\, C_P^2\, Ri^2\,\nu^{-1}\Delta t}\|\widetilde{\theta}^{n}_h\|^2_{{L^2}}
	\, + \, {\nu^{-1}}{\Delta t }\|\bf^{n+1}\|_{-1}^2. 
\end{gather*}
Notice that from \eqref{stabforvel}, one can get
\begin{align*}
\Delta t\sum_{n=0}^{N-1}\|\widetilde{\theta}_h^{n}\|_{L^2}^2\leq \Delta t \,N\left(\|\widetilde{\theta}_h^{0}\|_{L^2}^2 +\kappa^{-1}\Delta t\sum_{n=0}^{N-1}\|\Psi_{h}^{n+1}\|_{-1}^2\right)=:T\,M.
\end{align*}
Summing over time steps with this estimate and rearranging terms finishes the proof.
\end{proof}
\section{Error Analysis}
In this section, we show that solutions of the proposed algorithm converge to the true solutions of \eqref{Bouss}. We denote true Boussinesq solutions at time level $t^{n+1}, $ by $$\bu^{n+1}:= \bu(t^{n+1}),\,\, p^{n+1}:=p(t^{n+1}), \,\, \theta^{n+1}:=\theta(t^{n+1}), \hspace{2mm}\, \, \, n=-1,0,1,..., N-1.$$
The error analysis needs the following error decompositions at time level $t^{n+1}$: 
\begin{align*}
\be_{\widetilde{u}}^{n+1}:&=\bu^{n+1}- \widetilde{\bu}_h^{n+1} =\left(\bu^{n+1}- {\bm P}_{\bV_h}({\bu}^{n+1}) \right) - \left(\widetilde{\bu}_h^{n+1} - {\bm P}_{\bV_h}({\bu}^{n+1} )\right)=:\bfeta_{\bu}^{n+1}- \bLambda_{\bu,h}^{n+1},\\
\be_{{u}}^{n+1}:& =\bu^{n+1}- {\bu}_h^{n+1} =\left(\bu^{n+1} - {\bm P}_{\bV_h}({\bu}^{n+1}) \right) - \left({\bu}_h^{n+1} - {\bm P}_{\bV_h}({\bu}^{n+1} \right)=:\bfeta_{\bu}^{n+1} - \bphi_{\bu,h}^{n+1},\\
e_{\widetilde{\theta}}^{n+1}:& =\theta^{n+1}- \widetilde{\theta}_h^{n+1} =\left(\theta^{n+1} - P_{{Y}_h}({\theta}^{n+1}) \right) - \left(\widetilde{\theta}_h^{n+1} - P_{{Y}_h}({\theta}^{n+1}) \right)=:\eta_{\theta}^{n+1} - \Lambda_{\theta,h}^{n+1},
\end{align*}
where {$\bm P_{\bV_h}({\bu}^{n+1} )$ is the $L^2$-best approximation of $\bu^{n+1}$ in $\bV_h$, and $P_{Y_h}({\theta}^{n+1} )$ the $L^2$-best approximation of $\theta^{n+1}$ in $Y_h$.} Moreover, $\bfeta_{\bu}^{n+1}, \, \eta_{\theta}^{n+1}$ are interpolation errors, and $\bLambda_{\bu,h}^{n+1}, \bphi_{\bu,h}^{n+1}\in \bX_h$ and $\, \Lambda_{\theta,h}^{n+1}\in Y_h$ are finite element errors. We now present {the} following result which helps us to prove the convergence theorem.
\begin{lemma}\label{yarlemmaBouss}
Consider the second step of Algorithm~\ref{algBouss}. Then it holds:
\begin{align*}
\|\bLambda_{\bu, h}^{n+1}\|^2_{L^2}
	\, & \geq \, \|\bphi_{\bu,h}^{n+1}\|^2_{L^2}
	 \, + \, \|\bLambda_{\bu, h}^{n+1} - \bphi_{\bu,h}^{n+1}\|^2_{L^2}
	 \, + \, \beta\left( \|\nabla\cdot\bphi_{\bu,h}^{n+1}\|^2_{L^2} - \|\nabla\cdot\bphi_{\bu,h}^{n}\|^2_{L^2}\right)\nonumber\\
	 \, & + \, \frac{\beta}{2}\|\nabla\cdot(\bphi_{\bu,h}^{n+1}-\bphi_{\bu,h}^{n})\|^2_{L^2}
	 \,  + \, \gamma\Delta t \|\nabla\cdot \bphi_{\bu,h}^{n+1}\|^2_{L^2} 
	 \, - \, \beta\Delta t \|\nabla\cdot \bphi_{\bu,h}^{n}\|^2_{L^2}\nonumber\\ 
	 \, & - \,  \beta\,(1 + 2\,\Delta t)\|\nabla\bfeta_{\bu, t}\|^2_{L^{2}(t^n, t^{n+1}; L^2(\Omega))} 
	 \, - \, \gamma\,\Delta t \|\nabla\bfeta_{\bu}^{n+1}\|^2_{L^2}. 
\end{align*}
\end{lemma}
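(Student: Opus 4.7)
The plan is to turn the Step~2 identity \eqref{bousDisc4} into an equation for the discrete error involving $\bphi_{\bu,h}^{n+1}$ and $\bLambda_{\bu,h}^{n+1}$, test against a judicious choice, polarize to expose the desired discrete-energy structure, and absorb the interpolation residuals with Cauchy--Schwarz and Young. Since the true velocity satisfies $\nabla\cdot\bu^{n+1}=\nabla\cdot\bu^{n}=0$, the consistency identity
\[
(\bu^{n+1},\bvarphi_h)+(\beta+\gamma\Delta t)(\nabla\cdot\bu^{n+1},\nabla\cdot\bvarphi_h)=(\bu^{n+1},\bvarphi_h)+\beta(\nabla\cdot\bu^{n},\nabla\cdot\bvarphi_h)
\]
holds trivially for every $\bvarphi_h\in\bX_h$. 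Subtracting \eqref{bousDisc4} and inserting the decompositions of $\be_u^{n+1}$ and $\be_{\widetilde u}^{n+1}$ yields
\[
(\bphi_{\bu,h}^{n+1}-\bLambda_{\bu,h}^{n+1},\bvarphi_h)+(\beta+\gamma\Delta t)(\nabla\cdot\bphi_{\bu,h}^{n+1},\nabla\cdot\bvarphi_h)-\beta(\nabla\cdot\bphi_{\bu,h}^{n},\nabla\cdot\bvarphi_h)=\beta(\nabla\cdot(\bfeta_{\bu}^{n+1}-\bfeta_{\bu}^{n}),\nabla\cdot\bvarphi_h)+\gamma\Delta t(\nabla\cdot\bfeta_{\bu}^{n+1},\nabla\cdot\bvarphi_h).
\]

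Next I would set $\bvarphi_h=\bphi_{\bu,h}^{n+1}\in\bX_h$ and apply the polarization identity twice, once to $(\bphi_{\bu,h}^{n+1}-\bLambda_{\bu,h}^{n+1},\bphi_{\bu,h}^{n+1})$ and once to the cross term $\beta(\nabla\cdot\bphi_{\bu,h}^n,\nabla\cdot\bphi_{\bu,h}^{n+1})$. After multiplying the resulting identity by two, one side carries $\|\bLambda_{\bu,h}^{n+1}\|^{2}_{L^{2}}$ and the other carries the five favorable structural pieces sought in the lemma, namely $\|\bphi_{\bu,h}^{n+1}\|^{2}_{L^{2}}$, $\|\bphi_{\bu,h}^{n+1}-\bLambda_{\bu,h}^{n+1}\|^{2}_{L^{2}}$, $\beta(\|\nabla\cdot\bphi_{\bu,h}^{n+1}\|^{2}_{L^{2}}-\|\nabla\cdot\bphi_{\bu,h}^n\|^{2}_{L^{2}})$, $\beta\|\nabla\cdot(\bphi_{\bu,h}^{n+1}-\bphi_{\bu,h}^n)\|^{2}_{L^{2}}$, and $2\gamma\Delta t\|\nabla\cdot\bphi_{\bu,h}^{n+1}\|^{2}_{L^{2}}$, together with the two interpolation cross terms whose sign must now be reversed.

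The key, and only delicate, step is controlling those two cross terms without spoiling the inequality. For the $\gamma$ piece, a single Young inequality gives $2\gamma\Delta t(\nabla\cdot\bfeta_{\bu}^{n+1},\nabla\cdot\bphi_{\bu,h}^{n+1})\le\gamma\Delta t\|\nabla\cdot\bphi_{\bu,h}^{n+1}\|^{2}_{L^{2}}+\gamma\Delta t\|\nabla\bfeta_{\bu}^{n+1}\|^{2}_{L^{2}}$, which consumes half of the $2\gamma\Delta t$ coefficient and leaves exactly the $\gamma\Delta t\|\nabla\bfeta_{\bu}^{n+1}\|^{2}_{L^{2}}$ residual of the statement. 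For the $\beta$ piece, a naive application of Young would generate a stray $\beta\Delta t\|\nabla\cdot\bphi_{\bu,h}^{n+1}\|^{2}_{L^{2}}$ with no available reserve; the device is to first write $\nabla\cdot\bphi_{\bu,h}^{n+1}=\nabla\cdot(\bphi_{\bu,h}^{n+1}-\bphi_{\bu,h}^n)+\nabla\cdot\bphi_{\bu,h}^n$ and treat the two cross terms separately. The first is absorbed into half of $\beta\|\nabla\cdot(\bphi_{\bu,h}^{n+1}-\bphi_{\bu,h}^n)\|^{2}_{L^{2}}$ at the cost of $2\beta\Delta t\|\nabla\bfeta_{\bu,t}\|^{2}_{L^{2}(t^n,t^{n+1};L^2)}$; the second is handled by Young at scale $\Delta t$ to yield $\beta\Delta t\|\nabla\cdot\bphi_{\bu,h}^n\|^{2}_{L^{2}}+\beta\|\nabla\bfeta_{\bu,t}\|^{2}_{L^{2}(t^n,t^{n+1};L^2)}$. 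Throughout one uses $\bfeta_{\bu}^{n+1}-\bfeta_{\bu}^n=\int_{t^n}^{t^{n+1}}\bfeta_{\bu,t}\,dt$ together with Cauchy--Schwarz in time and the pointwise bound $\|\nabla\cdot\bv\|_{L^{2}}\le\|\nabla\bv\|_{L^{2}}$.

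The main obstacle is recognizing this last splitting. Routing the unavoidable $\beta\Delta t$ penalty onto $\bphi_{\bu,h}^n$ rather than $\bphi_{\bu,h}^{n+1}$ is essential: the $+\beta\|\nabla\cdot\bphi_{\bu,h}^{n+1}\|^{2}_{L^{2}}$ already sitting on the right-hand side of the polarized identity is needed intact so that the telescoping increment $\beta(\|\nabla\cdot\bphi_{\bu,h}^{n+1}\|^{2}_{L^{2}}-\|\nabla\cdot\bphi_{\bu,h}^n\|^{2}_{L^{2}})$ survives, and the $\bphi_{\bu,h}^n$ term produced by the splitting is exactly what the convergence theorem can absorb by a discrete Gronwall argument. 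Everything else is routine bookkeeping.
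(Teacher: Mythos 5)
Your proposal is correct and follows essentially the same route as the paper: subtract the trivial consistency identity satisfied by the divergence-free true velocity, test the resulting error equation with $\bphi_{\bu,h}^{n+1}$, polarize the two cross terms, and—crucially—split the $\beta$-interpolation term via $\nabla\cdot\bphi_{\bu,h}^{n+1}=\nabla\cdot(\bphi_{\bu,h}^{n+1}-\bphi_{\bu,h}^{n})+\nabla\cdot\bphi_{\bu,h}^{n}$ (the paper's ``add $\mp\beta(\nabla\cdot(\bfeta_{\bu}^{n+1}-\bfeta_{\bu}^{n}),\nabla\cdot\bphi_{\bu,h}^{n})$'' step) so that the unavoidable Young penalty lands on $\beta\Delta t\|\nabla\cdot\bphi_{\bu,h}^{n}\|^2_{L^2}$ rather than destroying the telescoping increment. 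The resulting coefficients $\frac{\beta}{2}$, $\gamma\Delta t$, $\beta\Delta t$, $\beta(1+2\Delta t)$ and $\gamma\Delta t$ all match the statement, so this is the paper's argument with only cosmetic reorganization of the polarization steps.
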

\begin{proof}
The true velocity solution at time level $t^{n+1}$ satisfies the following:
\begin{align*}
\left( {\bu}^{n+1}, \bv_h\right) + (\beta + \gamma\Delta t)(\nabla\cdot\bu^{n+1}, \nabla\cdot\bv_h)& = ({\bu}^{n+1}, \bv_h) + \beta (\nabla\cdot\bu^n , \nabla\cdot \bv_h).
\end{align*}
Subtract this system from the second step of Algorithm~\ref{algBouss}. Then using error {notation} and rearranging terms produces
\begin{align*}
\left( {\be}_{\bu}^{n+1}, \bv_h\right)
	 + \beta (\nabla\cdot\left({\be}_{\bu}^{n+1} - {\be}_{\bu}^{n} \right), \nabla\cdot\bv_h)
	+ \gamma\Delta t(\nabla\cdot{\be}_{\bu}^{n+1}, \nabla\cdot\bv_h)	
	& = ({\be}_{\widetilde{\bu}}^{n+1}, \bv_h).
\end{align*}
Using error decomposition and setting $\bv_h = \bphi_{\bu,h}^{n+1}$  yields 
\begin{gather*}
\|\bphi_{\bu,h}^{n+1}\|^2_{L^2} 
	 \, + \, \frac{\beta}{2}( \|\nabla\cdot\bphi_{\bu,h}^{n+1}\|^2_{L^2} - \|\nabla\cdot\bphi_{\bu,h}^{n}\|^2_{L^2} +  \|\nabla\cdot(\bphi_{\bu,h}^{n+1}-\bphi_{\bu,h}^{n})\|^2_{L^2})
	 \,  \, + \, \gamma\Delta t \|\nabla\cdot \bphi_{\bu,h}^{n+1}\|^2_{L^2} \nonumber\\
	 \, = \, (\bfeta_{\bu}^{n+1}, \,\,\bphi_{\bu,h}^{n+1})
	 \, + \,{\beta}(\nabla\cdot(\bfeta_{\bu}^{n+1} - \bfeta_{\bu}^{n}),\,\,\nabla\cdot\bphi_{\bu,h}^{n+1})
	 \, + \,{\gamma\,\Delta t}(\nabla\cdot\bfeta_{\bu}^{n+1},\,\,\nabla\cdot\bphi_{\bu,h}^{n+1})
	 \, - \, (\bfeta_{\bu}^{n+1} -\bLambda_{\bu,h}^{n+1}, \,\,\bphi_{\bu,h}^{n+1})\nonumber.
\end{gather*}
Now add $\mp {\beta}(\nabla\cdot(\bfeta_{\bu}^{n+1} - \bfeta_{\bu}^{n}),\,\,\nabla\cdot\bphi_{\bu,h}^{n})$ and notice that
$(\bfeta_{\bu}^{n+1} , \,\,\bphi_{\bu,h}^{n+1})= 0 .$
This produces
\begin{gather*}
\|\bphi_{\bu,h}^{n+1}\|^2_{L^2} 
	 \, + \, \frac{\beta}{2}\left( \|\nabla\cdot\bphi_{\bu,h}^{n+1}\|^2_{L^2} - \|\nabla\cdot\bphi_{\bu,h}^{n}\|^2_{L^2} +  \|\nabla\cdot(\bphi_{\bu,h}^{n+1}-\bphi_{\bu,h}^{n})\|^2_{L^2}\right)
	   \, + \, \gamma\Delta t \|\nabla\cdot \bphi_{\bu,h}^{n+1}\|^2_{L^2} \nonumber\\
	 \, =  \,{\beta}(\nabla\cdot(\bfeta_{\bu}^{n+1} - \bfeta_{\bu}^{n}),\,\,\nabla\cdot(\bphi_{\bu,h}^{n+1}-\bphi_{\bu,h}^{n} ))
	  \, + \ {\beta}(\nabla\cdot(\bfeta_{\bu}^{n+1} - \bfeta_{\bu}^{n}),\,\,\nabla\cdot\bphi_{\bu,h}^{n})\nonumber\\
	 \, + \,{\gamma\,\Delta t}(\nabla\cdot\bfeta_{\bu}^{n+1},\,\,\nabla\cdot\bphi_{\bu,h}^{n+1})
	 \, + \, (\bLambda_{\bu,h}^{n+1}, \,\,\bphi_{\bu,h}^{n+1})\nonumber.
\end{gather*}
To bound the first three right hand side terms, apply the Cauchy-Schwarz, and the Young's inequalities to get
\begin{align*}
{\beta}(\nabla\cdot(\bfeta_{\bu}^{n+1} - \bfeta_{\bu}^{n}),\,\,\nabla\cdot(\bphi_{\bu,h}^{n+1}-\bphi_{\bu,h}^{n}) 
	\,& \leq \, \beta \,  \|\nabla\left(\bfeta_{\bu}^{n+1} - \bfeta_{\bu}^{n}\right)\|_{L^2}\|\nabla\cdot(\bphi_{\bu,h}^{n+1}-\bphi_{\bu,h}^{n})\|_{L^2}\\
	\,& \leq \,  \beta \, \Delta t\|\nabla\bfeta_{\bu,t}\|^2_{L^2(t^n, t^{n+1};L^2(\Omega))}
	\, + \, \frac{\beta}{4}\|\nabla\cdot(\bphi_{\bu,h}^{n+1}-\bphi_{\bu,h}^{n})\|^2_{L^2},\\
{\beta}(\nabla\cdot(\bfeta_{\bu}^{n+1} - \bfeta_{\bu}^{n}),\,\,\nabla\cdot\bphi_{\bu,h}^{n})
	\,& \leq \,  \beta \, \|\nabla(\bfeta_{\bu}^{n+1} - \bfeta_{\bu}^{n})\|_{L^2}\|\nabla\cdot\bphi_{\bu,h}^{n}\|_{L^2}\\
\, & \leq \, \frac{ \beta \,  }{2}\|\nabla\bfeta_{\bu,t}\|^2_{L^2(t^n, t^{n+1};L^2(\Omega))}
	\, + \, \frac{\beta\,\Delta t}{2}\|\nabla\cdot\bphi_{\bu,h}^{n}\|^2_{L^2},\\
{\gamma\,\Delta t}(\nabla\cdot\bfeta_{\bu}^{n+1},\,\,\nabla\cdot\bphi_{\bu,h}^{n+1})
	 \, \leq \, {\gamma\,\Delta t}&\|\nabla\bfeta_{\bu}^{n+1}\|_{L^2} \|\nabla\cdot\bphi_{\bu,h}^{n+1} \|_{L^2}
	 \, \leq \, \frac{\gamma\,\Delta t}{2}\|\nabla\bfeta_{\bu}^{n+1} \|^2_{L^2} 
	 \, + \, \frac{\gamma\,\Delta t}{2}\|\nabla\cdot\bphi_{\bu,h}^{n+1}\|^2_{L^2}.
\end{align*}
For the last term, use the polarization identity to obtain
\begin{align*}
(\bLambda_{\bu,h}^{n+1}, \,\,\bphi_{\bu,h}^{n+1})\, = \, \frac{1}{2}\big(\|\bLambda_{\bu,h}^{n+1}\|^2_{L^2}
	 \, + \, \|\bphi_{\bu,h}^{n+1}\|^2_{L^2} 
	 \, - \,  \|\bLambda_{\bu,h}^{n+1}-\bphi_{\bu,h}^{n+1}\|^2_{L^2}\,\big).
\end{align*}
Plugging these estimates into velocity error equation, and reducing yields
\begin{gather*}
\frac{1}{2}\|\bphi_{\bu,h}^{n+1}\|^2_{L^2} 
	 \, + \, \frac{\beta}{2}\left( \|\nabla\cdot\bphi_{\bu,h}^{n+1}\|^2_{L^2} - \|\nabla\cdot\bphi_{\bu,h}^{n}\|^2_{L^2}\right)
	  \, + \,\frac{\beta}{4} \|\nabla\cdot(\bphi_{\bu,h}^{n+1}-\bphi_{\bu,h}^{n})\|^2_{L^2}
	 \, + \, \frac{\gamma\Delta t}{2} \|\nabla\cdot \bphi_{\bu,h}^{n+1}\|^2_{L^2} \nonumber\\
	 \, \leq \, \frac{ \beta \, { \, \left(1 + 2\Delta t\right)}\,}{2}\|\nabla\bfeta_{\bu,t}\|^2_{L^2(t^n, t^{n+1};L^2(\Omega))}
	\, + \, \frac{\beta\Delta t}{2}\|\nabla\cdot\bphi_{\bu,h}^{n}\|^2_{L^2}
	\, + \,\frac{\gamma\,\Delta t}{2}\|\nabla\bfeta_{\bu}^{n+1} \|^2_{L^2}\nonumber\\
	\, + \,\frac{1}{2}\big(\|\bLambda_{\bu,h}^{n+1}\|^2_{L^2} 		 
	 \, - \, \|\bLambda_{\bu,h}^{n+1}-\bphi_{\bu,h}^{n+1}\|^2_{L^2}\,\big). 
\end{gather*}
Multiplying by $2\Delta t$, and rearranging terms gives the desired estimate.
\end{proof}
We now prove an error estimate to Algorithm~\ref{algBouss}.
\begin{theorem}\label{convBouss} Assume that true solution $(\bu, \theta, p)$ satisfies the regularity conditions :
\begin{gather*}
\bu \in L^{\infty}(0,T; \bm H^{k+1}(\Omega) \textcolor{red}{\,\cap\,\bm H^{3}(\Omega))}, \hspace{1mm} \bu_t  \in L^{\infty}(0,T; \bm H^{k+1}(\Omega)),\hspace{1mm}
\\
\bu_{tt} \in L^{2}(0,T; \bm L^2(\Omega)),  \hspace{1mm}p \in L^{\infty}(0,T;L^2(\Omega)),\hspace{1mm}\\
\theta\in L^{\infty}(0,T;H^{k+1}(\Omega) \textcolor{red}{\, \cap \, H^{3}(\Omega))}, \hspace{1mm} \theta_t \in L^{\infty}(0,T;H^{k+1}(\Omega)),\hspace{2mm}
\theta_{tt} \in L^{2}(0,T;  L^2(\Omega)).
\label{regularityBouss}
\end{gather*}
Let $\left(\widetilde{\bu}^{n+1}_h, p_h^{n+1},\widetilde{\theta}^{n+1}_h, \bu^{n+1}_h\right)$ be solution to Algorithm~\ref{algBouss}, and $(\bX_h, Q_h, Y_h)$ is given by $(\bP_k, P_{k-1}, P_{k})$. Then {the} errors satisf{y} the bound \vspace{1mm}
\begin{gather}
\|\be_{\bu}^{N}\|^2_{L^2} 
	\, + \,  \|\be_{\widetilde{\theta}}^{N}\|^2_{L^2}
	\, + \, \beta \,\|\nabla\cdot \be_{\bu}^{N}\|^2_{L^2} 
	\, + \, \gamma\||\nabla\cdot \be_{\bu}|\|^2_{2,0}
	\, + \,\nu \||\nabla \be_{\widetilde{\bu}}|\|^2_{2,0}
	\, + \,	\kappa\, \||\nabla \be_{\widetilde{\theta}}|\|^2_{2,0}\nonumber\\
	\vspace{1cm}
	\, \leq \,C \left(h^{2k+2} + \Delta t h^{2k-1} + h^{2k} + \Delta t^2\right),
	\end{gather}
where $C$ is a generic constant independent of the time step and mesh size.
\end{theorem}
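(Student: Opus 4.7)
The plan is to derive error equations by subtracting the Step~1 discrete equations from the continuous Boussinesq system at $t^{n+1}$, invoke the decompositions $\be_{\widetilde{\bu}}^{n+1}=\bfeta_{\bu}^{n+1}-\bLambda_{\bu,h}^{n+1}$ and $e_{\widetilde{\theta}}^{n+1}=\eta_{\theta}^{n+1}-\Lambda_{\theta,h}^{n+1}$, test with $-2\Delta t\,\bLambda_{\bu,h}^{n+1}\in\bV_h$ and $-2\Delta t\,\Lambda_{\theta,h}^{n+1}\in Y_h$, and transfer the resulting $\|\bLambda_{\bu,h}^{n+1}\|_{L^2}^2$ bound to one on $\|\bphi_{\bu,h}^{n+1}\|_{L^2}^2$ via Lemma~\ref{yarlemmaBouss}. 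Since the test function lies in $\bV_h$, the pressure-error term drops out, the $L^2$-projection identity $(\bfeta_{\bu}^{n+1},\bLambda_{\bu,h}^{n+1})=0$ simplifies the time differences, and a standard backward Euler truncation argument generates the $O(\Delta t^2)$ consistency contribution via integral remainders involving $\bu_{tt}$ and $\theta_{tt}$.

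I would treat the temperature equation first. Polarization turns the time difference into $\|\Lambda_{\theta,h}^{n+1}\|_{L^2}^2-\|\Lambda_{\theta,h}^n\|_{L^2}^2+\|\Lambda_{\theta,h}^{n+1}-\Lambda_{\theta,h}^n\|_{L^2}^2$, the diffusion contributes $2\kappa\,\Delta t\|\nabla\Lambda_{\theta,h}^{n+1}\|_{L^2}^2$, and the trilinear difference decomposes as
\[
b^{*}(\bu^{n+1}-\bu_h^n,\theta^{n+1},\Lambda_{\theta,h}^{n+1})+b^{*}(\bu_h^n,\eta_\theta^{n+1},\Lambda_{\theta,h}^{n+1}),
\]
since $b^{*}(\bu_h^n,\Lambda_{\theta,h}^{n+1},\Lambda_{\theta,h}^{n+1})=0$ by skew-symmetry. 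I would further split the velocity difference as $(\bu^{n+1}-\bu^n)+(\bu^n-\widetilde{\bu}_h^n)+(\widetilde{\bu}_h^n-\bu_h^n)$, bounding each piece with Lemma~\ref{skewlemma} together with Agmon's inequality applied to $\theta^{n+1}$ (which is what the hypothesis $\theta\in L^\infty(0,T;H^3(\Omega))$ is for); Young's inequality then absorbs factors of $\|\nabla\Lambda_{\theta,h}^{n+1}\|_{L^2}^2$ into the diffusive left-hand side.

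The velocity analysis mirrors this, producing $\|\bLambda_{\bu,h}^{n+1}\|_{L^2}^2$ and $2\Delta t\,\nu\|\nabla\bLambda_{\bu,h}^{n+1}\|_{L^2}^2$ on the left after polarization, with the nonlinear and interpolation terms bounded as before (now with Agmon's applied to $\bu$), plus the Boussinesq coupling $2\,Ri\,\Delta t\,(\langle 0,\theta^{n+1}-\widetilde{\theta}_h^n\rangle,\bLambda_{\bu,h}^{n+1})$, which I would split into a $\theta^{n+1}-\theta^n$ piece (absorbed in the $\Delta t^2$ truncation via $\theta_t$) and an $e_{\widetilde{\theta}}^n$ piece (absorbed after combining with the already-derived temperature estimate). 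Lemma~\ref{yarlemmaBouss} is then applied termwise to replace $\|\bLambda_{\bu,h}^{n+1}\|_{L^2}^2$ by the positive quantities $\|\bphi_{\bu,h}^{n+1}\|_{L^2}^2$, the telescoping grad-div pair $\beta(\|\nabla\cdot\bphi_{\bu,h}^{n+1}\|_{L^2}^2-\|\nabla\cdot\bphi_{\bu,h}^n\|_{L^2}^2)$, and the dissipative $\gamma\Delta t\|\nabla\cdot\bphi_{\bu,h}^{n+1}\|_{L^2}^2$, at the price of the interpolation residuals $\beta(1+2\Delta t)\|\nabla\bfeta_{\bu,t}\|_{L^{2}(t^n,t^{n+1};L^2(\Omega))}^2$ and $\gamma\Delta t\|\nabla\bfeta_{\bu}^{n+1}\|_{L^2}^2$, together with the problematic residual $\beta\Delta t\|\nabla\cdot\bphi_{\bu,h}^n\|_{L^2}^2$.

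The principal obstacle is this last residual: because the telescoping grad-div sum and $\beta\Delta t\|\nabla\cdot\bphi_{\bu,h}^n\|_{L^2}^2$ are both nonnegative, they cannot cancel directly. I plan to bound it via the inverse inequality $\|\nabla\cdot\bphi_{\bu,h}^n\|_{L^2}\le C_{inv}h^{-1}\|\bphi_{\bu,h}^n\|_{L^2}$, producing a recursion-friendly term $C\beta\Delta t\,h^{-2}\|\bphi_{\bu,h}^n\|_{L^2}^2$ that fits the hypothesis of Lemma~\ref{DiscreteGronwall}. Summing over $n$, coupling with the temperature estimate, and applying discrete Gronwall then yields bounds on $\|\bphi_{\bu,h}^N\|_{L^2}^2$, $\|\Lambda_{\theta,h}^N\|_{L^2}^2$, and the associated grad-div, gradient, and time-averaged gradient quantities. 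Careful bookkeeping of the $h$-powers through the interpolation estimates \eqref{app1}--\eqref{app2} produces the $h^{2k+2}$ (from $L^2$ interpolation), $h^{2k}$ (from $H^1$ interpolation and $\gamma\Delta t\|\nabla\bfeta_\bu\|_{L^2}^2$), $\Delta t\,h^{2k-1}$ (from the $h^{-1}$ in the Gronwall coupling combined with the $h^{2k}$ divergence interpolation), and $\Delta t^2$ contributions; the triangle inequality $\|\be_{\bu}^N\|_{L^2}\le\|\bfeta_{\bu}^N\|_{L^2}+\|\bphi_{\bu,h}^N\|_{L^2}$ and its analogues for the divergence, gradient, and temperature norms then deliver the stated bound.
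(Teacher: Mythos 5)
Your overall architecture matches the paper's: derive error equations with a backward-Euler consistency term, test with $2\Delta t\,\bLambda_{\bu,h}^{n+1}$ and $2\Delta t\,\Lambda_{\theta,h}^{n+1}$, use the $L^2$-projection orthogonality, bound the nonlinear terms with Lemma~\ref{skewlemma}, Agmon's inequality (this is indeed what the $H^3$ regularity is for), and the inverse inequality, convert $\|\bLambda_{\bu,h}^{n+1}\|_{L^2}^2$ to $\|\bphi_{\bu,h}^{n+1}\|_{L^2}^2$ via Lemma~\ref{yarlemmaBouss}, then Gronwall and the triangle inequality. However, your treatment of the residual $\beta\Delta t\,\|\nabla\cdot\bphi_{\bu,h}^{n}\|_{L^2}^2$ left over from Lemma~\ref{yarlemmaBouss} is a genuine flaw. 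Bounding it by the inverse inequality as $C\beta\Delta t\,h^{-2}\|\bphi_{\bu,h}^n\|_{L^2}^2$ puts $d_n\sim C\beta h^{-2}$ into the hypothesis of Lemma~\ref{DiscreteGronwall}, and the conclusion then carries the factor $\exp\bigl(\Delta t\sum_n d_n\bigr)=\exp(C\beta T h^{-2})$, which blows up as $h\to 0$ and destroys the claimed rate (the constant $C$ in the theorem must be independent of $h$). The correct move — and the one the paper makes in \eqref{errvel5} — is to leave the term as $C\Delta t\,\beta\|\nabla\cdot\bphi_{\bu,h}^n\|_{L^2}^2$ and include $\beta\|\nabla\cdot\bphi_{\bu,h}^n\|_{L^2}^2$ inside the Gronwall quantity $a_n$ itself; this is legitimate precisely because the telescoping pair $\beta\bigl(\|\nabla\cdot\bphi_{\bu,h}^{n+1}\|_{L^2}^2-\|\nabla\cdot\bphi_{\bu,h}^{n}\|_{L^2}^2\bigr)$ from Lemma~\ref{yarlemmaBouss} leaves $\beta\|\nabla\cdot\bphi_{\bu,h}^{N}\|_{L^2}^2$ on the left-hand side after summation, so the grad-div norm is genuinely controlled and $d_n$ stays $O(1)$.

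A secondary inaccuracy: the pressure term does not ``drop out'' because the test function lies in $\bV_h$. Since $\bLambda_{\bu,h}^{n+1}$ is only \emph{discretely} divergence-free, $(p^{n+1},\nabla\cdot\bLambda_{\bu,h}^{n+1})\neq 0$ in general; one must insert an arbitrary $q_h\in Q_h$ (which does vanish against $\bV_h$) and bound $(p^{n+1}-q_h,\nabla\cdot\bLambda_{\bu,h}^{n+1})$, contributing $\nu^{-1}\Delta t\,\inf_{q_h}\|p^{n+1}-q_h\|_{L^2}^2\lesssim \nu^{-1}\Delta t\,h^{2k}$ to the right-hand side. This is one of the sources of the $h^{2k}$ term in the final bound and cannot be omitted.
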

\begin{proof}
The proof is divided into four steps since it is very long and technical. In the first step, the error equations are obtained by splitting the velocity and magnetic errors into approximation errors and finite element remainders. In the second step, all right hand side terms of the error equations are bounded below. The third step applies the discrete Gronwall lemma, and the last step the triangle inequality for the error terms. \, \vspace{2mm}\\
\textbf{Step 1:} \textbf{[}\textbf{\textit{The derivation of error equations}}.\textbf{]}\, \vspace{3mm}\\
 True solution $(\bu, p, \theta)$ satisfies the equations, \textcolor{red}{$\forall \, \bv_h \in \bV_h$ and $\forall \, \chi_h\in Y_h$,}
\begin{align}
	\left( \frac{{\bu}^{n+1} -\bu^n}{\Delta t}, \,\,\bv_h\right)
	 \, + \, \nu (\nabla{\bu}^{n+1}, \,\,\nabla\bv_h )
	 \, + \,b\left({\bu}^{n} , \,\,{\bu}^{n+1} ,\,\,\bv_h\right) 
	 \, - \,(p^{n+1}, \,\,\nabla\cdot\bv_h)
	  \, = Ri \left(\langle 0,  \,\theta^{n} \rangle, \,\bv_h\right)\nonumber\\
	   \, + \, (\bf^{n+1}, \,\,\bv_h)
	  \, - \, E_1(\bu, \theta, \bv_h),\label{trueBous1}\\
	 \left( \frac{{\theta}^{n+1} -\theta^n}{\Delta t}, \,\chi_h\right)
	  \, + \,\kappa (\nabla{\theta}^{n+1}, \,\nabla\chi_h )
	  \, +  \, b^*\left(\bu^{n}, \,{\theta}^{n+1}, \,\chi_h\right)
	  \, = \, (\Psi^{n+1}, \,\chi_h)
	  \, - \, E_2(\bu, \theta, \chi_h)	   
	   \label{trueBous3},
\end{align}
where $E_{1}(\bu, \theta, \bv_h)$ and $E_{2}(\bu, \theta, \chi_h)$ are consistency errors and given by
\begin{align*}
E_{1}(\bu, \theta, \bv_h)
	:&= \left(\bu_{t}^{n+1}- \frac{{\bu}^{n+1} -\bu^n}{\Delta t} , \,\,\bv_h\right)
	\, + \, b(\bu^{n+1}- \bu^{n}, \bu^{n+1}, \bv_h)
	\, - \, Ri \left(\langle 0, \,\theta^{n+1} - \theta^{n} \rangle, \,\bv_h\right)\\
E_{2}(\bu, \theta, \chi_h)
	:&= \left(\theta_{t}^{n+1}- \frac{{\theta}^{n+1} -\theta^n}{\Delta t} , \,\,\chi_h\right)
	\ + \ b^*(\bu^{n+1}- \bu^{n}, \theta^{n+1}, \chi_h).	
 \end{align*}
Subtract the first step of Algorithm~\ref{algBouss} from \eqref{trueBous1}-\eqref{trueBous3}, and use error {notation} to produce: \textcolor{red}{for any $q_h\in Q_h$}
\begin{multline}
\left(\frac{\be_{\widetilde{\bu}}^{n+1}- \be_{\bu}^{n}}{\Delta t},\,\,\bv_h\right)
	\, + \, \nu\left(\nabla\be_{\widetilde{\bu}}^{n+1},  \,\,\nabla\bv_h\right)
	\, + \, b(\bu^{n}, \bu^{n+1}, \bv_h)
	\, - \, b(\bu^{n}_h, \widetilde{\bu}^{n+1}_h, \bv_h) 
	\, - \, \left(p^{n+1}- q_h,\,\,\nabla\cdot\bv_h\right)\\
	\, - \, Ri(\langle \bm 0, e_{\widetilde{\theta}}^n \rangle, \bv_h)
	\, + \,E_1(\bu, \theta, \bv_h) = 0\label{errvel1},
\end{multline}
and
\begin{multline}
\left(\frac{e_{\widetilde{\theta}}^{n+1}- e_{\widetilde{\theta}}^{n}}{\Delta t},\,\,\chi_h\right)
	\, + \, \kappa\left(\nabla e_{\widetilde{\theta}}^{n+1},  \,\,\nabla\chi_h\right)
	\, + \, b^*(\bu^{n}, \theta^{n+1}, \chi_h)
	\, - \, b^*(\bu^{n}_h, \widetilde{\theta}^{n+1}_h, \chi_h)
	\, + \,E_2(\bu, \theta, \chi_h) = 0.\label{errtemp1}
\end{multline}
Using error decomposition and setting $\bv_h = 2\Delta t \bLambda_{\bu,h}^{n+1}$ in \eqref{errvel1}, and $\chi_h= 2\Delta t \Lambda_{{{\theta}},h}^{n+1}$ in \eqref{errtemp1} yields: for any $q_h\in Q_h,$
\begin{multline}
\|\bLambda_{\bu,h}^{n+1}\|^2_{L^2} - \|\bphi_{\bu,h}^{n}\|^2_{L^2}
	\, + \, \|\bLambda_{\bu,h}^{n+1} - \bphi_{\bu,h}^{n} \|^2_{L^2}
	\, + \, 2\nu\Delta t \|\nabla\bLambda_{\bu,h}^{n+1}\|^2_{L^2}\\
	\, = \,
	2\left(\bfeta_{\bu}^{n+1}-\bfeta_{\bu}^{n},\,\, \bLambda_{\bu,h}^{n+1}\right)
	\, + \, 2\nu\Delta t \left(\nabla\bfeta_{\bu}^{n+1},\,\, \nabla\bLambda_{\bu,h}^{n+1}\right)
	\, + \, 2\Delta t\bigg( 
	 b(\bu^{n}, \bu^{n+1}, \bLambda_{\bu,h}^{n+1})
	\, - \, b(\bu^{n}_h, \widetilde{\bu}^{n+1}_h, \bLambda_{\bu,h}^{n+1})\bigg)\\
	\, -2\, Ri\,\Delta t(\langle \bm 0, e_{\widetilde{\theta}}^n \rangle, \bLambda_{\bu,h}^{n+1})
	\, - \, 2\,\Delta t\, \left(p^{n+1}- q_h,\,\,\nabla\cdot\bLambda_{\bu,h}^{n+1}\right)
	 \, + \, 2\,\Delta t E_{1}(\bu, \theta, \bLambda_{\bu,h}^{n+1})\label{errvel2},
\end{multline}
and
\begin{multline}
\|\Lambda_{{\theta},h}^{n+1}\|^2_{L^2} - \|\Lambda_{{\theta},h}^{n}\|^2_{L^2}
	\, + \, \|\Lambda_{{\theta},h}^{n+1} - \Lambda_{{\theta},h}^{n} \|^2_{L^2}
	\, + \, 2\,\kappa \,\Delta t \|\nabla\Lambda_{{\theta},h}^{n+1}\|^2_{L^2}\\
	\, = \,
	2\,\left(\eta_{{\theta}}^{n+1}-\eta_{{\theta}}^{n},\,\, \Lambda_{{\theta},h}^{n+1}\right)
	\, + \, 2\,\kappa\,\Delta t \,\left(\nabla\eta_{{\theta}}^{n+1},\,\, \nabla\Lambda_{{\theta},h}^{n+1}\right)
	\, + \, 2\,\Delta t\,\bigg( 
	 b^*(\bu^{n}, \theta^{n+1}, \, \Lambda_{{\theta},h}^{n+1})
	\, - \, b^*(\bu^{n}_h, \widetilde{\theta}^{n+1}_h,\, \Lambda_{{\theta},h}^{n+1})\bigg)\\
	+ \, 2\,\Delta t E_{2}(\bu, \theta, \Lambda_{{\theta},h}^{n+1})\label{errtemp2}.
 \end{multline}
Now, we rewrite the nonlinear terms in Equation \ref{errvel2} by adding and subtracting terms as follows:
\textcolor{red}{
 \begin{align*}  
	 b(\bu^{n}, \bu^{n+1}, \bLambda_{\bu,h}^{n+1})
	\, &- \,b(\bu^{n}_h, {\bu}^{n+1}, \bLambda_{\bu,h}^{n+1})
	\, + \, b(\bu^{n}_h, {\bu}^{n+1}, \bLambda_{\bu,h}^{n+1})
	\, - \, b(\bu^{n}_h, \widetilde{\bu}^{n+1}_h, \bLambda_{\bu,h}^{n+1})\\
	\,& = \,b(\be_{\bu}^{n}, \bu^{n+1}, \bLambda_{\bu,h}^{n+1})
	\, + \,b(\bu^{n}_h, \be_{\widetilde{\bu}}^{n+1}, \bLambda_{\bu,h}^{n+1})\\
	\, &= \,b(\be_{\bu}^{n}, \bu^{n+1}, \bLambda_{\bu,h}^{n+1})
	\, + \,b(\bu^{n}_h, \bfeta_{\bu}^{n+1}, \bLambda_{\bu,h}^{n+1}),\\
	\, &= \,b(\be_{\bu}^{n}, \bu^{n+1}, \bLambda_{\bu,h}^{n+1})
	\, + \,b(\bu^{n}_h, \bfeta_{\bu}^{n+1}, \bLambda_{\bu,h}^{n+1}) 
	- \, b({\bu}^{n}, \bfeta_{\bu}^{n+1}, \bLambda_{\bu,h}^{n+1})
	 \, + \, b({\bu}^{n}, \bfeta_{\bu}^{n+1}, \bLambda_{\bu,h}^{n+1})\\
	 \, &= \,\,b(\be_{\bu}^{n}, \bu^{n+1}, \,\bLambda_{\bu,h}^{n+1}) 
	 \, - \, b(\be_{\bu}^{n}, \,\, \bfeta_{\bu}^{n+1}, \,\bLambda_{\bu,h}^{n+1})
	 \, + \, b({\bu}^{n}, \bfeta_{\bu}^{n+1}, \bLambda_{\bu,h}^{n+1}).
\end{align*}
}
Plugging this rearrangement into Equation \ref{errvel2}, we have 
\begin{align}
	\|\bLambda_{\bu,h}^{n+1}\|^2_{L^2} 
	& - \|\bphi_{\bu,h}^{n}\|^2_{L^2}
	\,  + \, \|\bLambda_{\bu,h}^{n+1} - \bphi_{\bu,h}^{n} \|^2_{L^2}
	\, + \, 2\nu\Delta t \|\nabla\bLambda_{\bu,h}^{n+1}\|^2_{L^2}\nonumber\\
	\,& = \,
	2\left(\bfeta_{\bu}^{n+1}-\bfeta_{\bu}^{n},\,\, \bLambda_{\bu,h}^{n+1}\right)
	\, + \, 2\nu\Delta t \left(\nabla\bfeta_{\bu}^{n+1},\,\, \nabla\bLambda_{\bu,h}^{n+1}\right)\nonumber \\
	& \, \,\,\,\,\, \,\,\,\ \textcolor{red}{+ \, 2\,  \Delta \left(\, b(\be_{\bu}^{n}, \bu^{n+1}, \,\bLambda_{\bu,h}^{n+1}) 
	 \, - \, b(\be_{\bu}^{n}, \,\, \bfeta_{\bu}^{n+1}, \,\bLambda_{\bu,h}^{n+1})
	 \, + \, b({\bu}^{n}, \bfeta_{\bu}^{n+1}, \bLambda_{\bu,h}^{n+1})\right) }\nonumber\\
	 \, & \, \,\,\,\,\, \,\,\,\ - \, 2 Ri\Delta t(\langle \bm 0, e_{\widetilde{\theta}}^n \rangle ,  \bLambda_{\bu,h}^{n+1}) 
	\, - \, 2\,\Delta t\, \left(p^{n+1}- q_h,\,\,\nabla\cdot\bLambda_{\bu,h}^{n+1}\right)
	 \, + \, 2\,\Delta t E_{1}(\bu, \,\, \theta, \,\, \bLambda_{\bu,h}^{n+1}).\label{errvel3}
\end{align}
Using similar treatment for the nonlinear terms in Equation \ref{errtemp2} produces 
\begin{align}
\|\Lambda_{{\theta},h}^{n+1}\|^2_{L^2} 
	&- \|\Lambda_{{\theta},h}^{n}\|^2_{L^2}
	\, + \, \|\Lambda_{{\theta},h}^{n+1} -\Lambda_{{\theta},h}^{n} \|^2_{L^2}
	\, + \, 2\,\kappa \,\Delta t \|\nabla\Lambda_{{\theta},h}^{n+1}\|^2_{L^2}\nonumber\\
	& \, = \,
	2\,\left(\eta_{{\theta}}^{n+1}-\eta_{{\theta}}^{n},\,\, \Lambda_{{\theta},h}^{n+1}\right)
	\, + \, 2\,\kappa\,\Delta t \,\left(\nabla\eta_{{\theta}}^{n+1},\,\, \nabla\Lambda_{{\theta},h}^{n+1}\right)\nonumber\\
	&\, \,\,\,\,\, \,\,\,\,\textcolor{red}{ + \, 2\Delta t\bigg( 
	 b^*(\be_{\bu}^{n}, \theta^{n+1}, \,\Lambda_{\theta,h}^{n+1}) 
	 \, - \, b^*(\be_{\bu}^{n}, \,\, \eta_{\theta}^{n+1}, \,\Lambda_{\theta,h}^{n+1})
	 \, + \, b^*({\bu}^{n}, \eta_{\theta}^{n+1}, \Lambda_{\theta,h}^{n+1})\bigg) }\nonumber\\
	  &\, \,\, \,\,\, \,\,\,\,+ \, 2\,\Delta t E_{2}(\bu, \,\, \theta, \,\, \Lambda_{\theta,h}^{n+1}).\label{errtemp3}
 \end{align}
\, \vspace{2mm}
\textbf{Step 2:} \textbf{[}\textbf{\textit{The estimation of the right hand side terms of error equations.}}\textbf{]}\, \vspace{3mm}\\
\noindent We note that right hand side terms of Equation \eqref{errvel3}-\eqref{errtemp3} are bounded in a similar way. Therefore, we only give estimates of the right hand side terms for Equation \eqref{errvel3}. To bound the first term in \eqref{errvel3}, one can apply the estimate of the dual pairing, and Young's inequality while for the second one the Cauchy-Schwarz and Young's inequalities :
\begin{align*}
2\left(\bfeta_{\bu}^{n+1}-\bfeta_{\bu}^{n},\,\, \bLambda_{\bu,h}^{n+1}\right)
	\, & \leq \,\frac{\Delta t}{\varepsilon_1}\|\bfeta_{\bu,t}\|^2_{L^2(t^n,\, t^{n+1}; H^{-1}(\Omega))}
	 \, + \, {\varepsilon_1}\Delta t\|\nabla\bLambda_{\bu,h}^{n+1}\|^2_{L^2},\\
2\nu\Delta t \left(\nabla\bfeta_{\bu}^{n+1},\,\, \nabla\bLambda_{\bu,h}^{n+1}\right)
	 & \, \leq \, \frac{\, \nu\Delta t}{\varepsilon_2}\| \nabla\bfeta_{\bu}^{n+1}\|^2_{L^2} 
	 + {\varepsilon_2 \nu \, \Delta t}\|\nabla\bLambda_{\bu,h}^{n+1}\|^2_{L^2}.
\end{align*}
For the first non linear term, we first use the error decomposition to get
\begin{align*}
 2\, \Delta t \, 
	 b(\be_{\bu}^{n}, \bu^{n+1}, \,\bLambda_{\bu,h}^{n+1})
	 = 2\, \Delta t \, b(\bfeta_{\bu}^{n}, \bu^{n+1}, \,\bLambda_{\bu,h}^{n+1})
	 -2\, \Delta t \, b(\bphi_{\bu,h}^{n}, \bu^{n+1}, \,\bLambda_{\bu,h}^{n+1}).
\end{align*}
For the first term, apply the second estimate of Lemma~\ref{skewlemma} together with the Young's inequality to obtain
\begin{align*}
 2\, \Delta t \, b(\bfeta_{\bu}^{n}, \bu^{n+1}, \,\bLambda_{\bu,h}^{n+1})
 	& \leq 2\, C\, \Delta t \,\sqrt{\|\bfeta_{\bu}^{n}\|_{L^2}\|\nabla\bfeta_{\bu}^{n}\|_{L^2}}\|\nabla\bu^{n+1}\|_{L^2}\|\nabla\bLambda_{\bu,h}^{n+1}\|_{L^2}\\
 	& \leq \frac{\, C\, \Delta t }{\varepsilon_3}\|\nabla\bu^{n+1}\|^2_{L^2} \|\bfeta_{\bu}^{n}\|_{L^2}\|\nabla\bfeta_{\bu}^{n}\|_{L^2} 
	 + \varepsilon_3 \,\Delta t \|\nabla\bLambda_{\bu,h}^{n+1}\|^2_{L^2}.
	 \end{align*} 
The second term is first expanded by using the definition of $b(\cdot, \cdot, \cdot)$, and then is estimated below by using the H{\"{o}}lder inequality with $L^2-L^{\infty}-L^2$, the Poincar{'{e}}-Friedrich and the Agmon's Inequalities together with Young's Inequality:
\textcolor{red}{
\begin{align*}
 2\, \Delta t 	& \, b(\bphi_{\bu,h}^{n}, \bu^{n+1}, \,\bLambda_{\bu,h}^{n+1}) 
  =  \Delta t \bigg( 
 	 \left(\bphi_{\bu,h}^{n}\cdot\nabla\bu^{n+1}, \,\bLambda_{\bu,h}^{n+1}\right)
 	 - \left(\bphi_{\bu,h}^{n}\cdot\nabla\bLambda_{\bu,h}^{n+1}, \,   \bu^{n+1} \right)
 	 \bigg)\\
 	 \vspace{2cm}
 	 & \leq C \Delta t \left(\,\|\bphi_{\bu,h}^{n}\|_{L^2} \|\nabla\bu^{n+1}\|_{L^{\infty}} \|\bLambda_{\bu,h}^{n+1}\|_{L^2} 
 	 +	\,\|\bphi_{\bu,h}^{n}\|_{L^2} \|\nabla\bLambda_{\bu,h}^{n+1}\|_{L^2}\|\bu^{n+1}\|_{L^{\infty}}\right)
 	 \\
 	  \vspace{2cm}
 	 & \leq C\,\Delta t \,\left(\|\bphi_{\bu,h}^{n}\|_{L^2} \|\nabla\bu^{n+1}\|_{L^{\infty}}C_P \|\nabla\bLambda_{\bu,h}^{n+1}\|_{L^2} 
 	 +	\,\|\bphi_{\bu,h}^{n}\|_{L^2} \|\nabla\bLambda_{\bu,h}^{n+1}\|_{L^2}\|\bu^{n+1}\|_{L^{\infty}} \right)\\
 	 \vspace{1cm}
 	 & \leq C\,\Delta t \,\|\bphi_{\bu,h}^{n}\|_{L^2} \|\bu^{n+1}\|_{\bm H^{3}}\|\nabla\bLambda_{\bu,h}^{n+1}\|_{L^2}\\
 	 \vspace{1cm}
 	 &\leq  \frac{\, C\, \Delta t}{\varepsilon_4} \|\bu^{n+1}\|_{\bm H^3}^2\|\bphi_{\bu,h}^{n}\|_{L^2}^2  
 	 + {\varepsilon _4}\Delta t\,\|\nabla \bLambda_{\bu,h}^{n+1}\|_{L^2}^2.
 \end{align*}
 }
For \textcolor{red}{$b( \bfeta_{\bu}^{n}, \,\, \bfeta_{\bu}^{n+1}, \,\bLambda_{\bu,h}^{n+1})$}, we apply Lemma~\ref{skewlemma} together with Young's inequality, and for \textcolor{red}{$
 b(\bphi_{\bu,h}^{n}, \,\,\bfeta_{\bu}^{n+1}, \bLambda_{\bu,h}^{n+1})$} Lemma~\ref{skewlemma}, the inverse inequality together with Young's Inequality to get
 \textcolor{red}{
\begin{align*}
 2\, \Delta t \, b( \bfeta_{\bu}^{n}, \,\, \bfeta_{\bu}^{n+1}, \,\bLambda_{\bu,h}^{n+1})
	& \leq 2\, C\, \Delta t \sqrt{\|  \bfeta_{\bu}^{n}\|_{L^2}\|\nabla \bfeta_{\bu}^{n+1}\|_{L^2}}\|\nabla \bfeta_{\bu}^{n+1}\|_{L^2} \|\nabla\bLambda_{\bu,h}^{n+1} \|_{L^2} \\
	\vspace{1mm}
	& \leq \frac{\, C \, \Delta t\, }{\varepsilon_5}\| \bfeta_{\bu}^{n}\|_{L^2}\|\nabla \bfeta_{\bu}^{n+1}\|_{L^2}^3
	+ \varepsilon_5\Delta t\|\nabla\bLambda_{\bu,h}^{n+1} \|_{L^2}^2,
	 \\
	 \vspace{1cm}
	 2\, \Delta t \,  b(\bphi_{\bu,h}^{n}, \,\,\bfeta_{\bu}^{n+1}, \bLambda_{\bu,h}^{n+1})\bigg)
	 & \leq 2\, C\, \Delta t \sqrt{\|\bphi_{\bu,h}^{n}\|_{L^2}^{1/2}\|\nabla\bphi_{\bu,h}^{n}\|_{L^2}^{1/2}}\|\nabla\bfeta_{\bu}^{n+1}\|_{L^2}\|\nabla\bLambda_{\bu,h}^{n+1}\|_{L^2}\\	
	 & \leq 2\, C\, \Delta t h^{-1/2}\|\bphi_{\bu,h}^{n}\|_{L^2}\|\nabla\bfeta_{\bu}^{n+1}\|_{L^2}\|\nabla\bLambda_{\bu,h}^{n+1}\|_{L^2}\\ 
	 & \leq \frac{\, C\, \Delta t h^{-1}}{\varepsilon_6}\|\bphi_{\bu,h}^{n}\|_{L^2}^2 \|\nabla\bfeta_{\bu}^{n+1}\|_{L^2}^2 
	 + \varepsilon_6 \Delta t\|\nabla\bLambda_{\bu,h}^{n+1}\|_{L^2}^2 .
\end{align*}
}
\textcolor{red}{In a similar manner, the last nonlinear term is estimated below as follows:
\begin{align*}
2\, \Delta t \, b(\bu^n, \bfeta_{\bu}^{n+1}, \bLambda_{\bu,h}^{n+1})& \leq C \Delta t \|\nabla \bu^n \|_{L^2}\|\nabla\bfeta_{\bu}^{n+1}\|_{L^2}\|\nabla\bLambda_{\bu,h}^{n+1}\|_{L^2}\\
& \leq \frac{C \Delta t}{\varepsilon_7}\|\nabla \bu^n \|_{L^2}^2\|\nabla\bfeta_{\bu}^{n+1}\|_{L^2}^2 \, +  \, \varepsilon_7 \Delta t\|\nabla\bLambda_{\bu}^{n+1}\|_{L^2}^2.
\end{align*}
}
To bound the last two terms, We use error decomposition and apply the Cauchy-Schwarz and Young's inequalities to get
\begin{align*}
2\,\Delta t\, (p^{n+1}- q_h,\,\,\nabla\cdot\bLambda_{\bu,h}^{n+1})
	\, &\leq \, {2\,\Delta t}\|p^{n+1}- q_h\|_{L^2} \|\nabla\bLambda_{\bu,h}^{n+1}\|_{L^2}\\
	\, &\leq \, \frac{\, \Delta t}{\varepsilon_8} \inf\limits_{q_h\in Q_h}\|p^{n+1}- q_h\|_{L^2}^2 + \varepsilon_8 \, \Delta t\|\nabla\bLambda_{\bu,h}^{n+1}\|_{L^2}^2,
\end{align*}
and 
\begin{align*}
2\,\Delta t\, Ri (\langle \bm 0, e_{\widetilde{\theta}}^n \rangle \bLambda_{\bu,h}^{n+1})
	\, &\leq \, {2\,\Delta t}\left(Ri\, \left(\|\eta_{\theta}^{n}\|_{L^2}+\|\Lambda_{\theta,h}^{n}\|_{L^2} \right) \, C_P \|\nabla\bLambda_{\bu,h}^{n+1}\|_{L^2}\right)\\
	\, &\leq \, \frac{ C_P^2 \, Ri^2 \Delta t}{\varepsilon_9} \left(\|\eta_{\theta}^{n}\|_{L^2}^2 +  \|\Lambda_{\theta,h}^{n}\|_{L^2}^2 \right)+ \varepsilon_9 \, \Delta t\|\nabla\bLambda_{\bu,h}^{n+1}\|_{L^2}^2.
\end{align*}
We now bound the terms of $E_{1}(\bu, \theta, \bLambda_{\bu,h}^{n+1})$. First apply Taylor's Theorem with integral remainder term in the first argument of each term of $E_{1}(\bu, \theta, \bv_h)$.  Then use Lemma~\ref{skewlemma}, the Young's inequality for the first and second terms, and the Cauchy-Schwarz, the Poincare and the Young's inequalities for the last term which produce
\begin{align*}
2\,\Delta t\, b(\bu^{n+1}- \bu^{n}, \bu^{n+1}, \bLambda_{\bu,h}^{n+1}) 
   \,& \leq 2\,\Delta t\,\, C \, \Delta t^{1/2} 
   \|\nabla \bu_{t}\|_{L^2(t^n, t^{n+1};L^2(\Omega))}\|\nabla\bu^{n+1}\|_{L^2}\|\nabla\bLambda_{\bu,h}^{n+1}\|_{L^2} \\
   &\leq \,\frac{\,C\, \Delta t^2}{\varepsilon_{10} }\|\nabla \bu_{t}\|_{L^2(t^n, t^{n+1};L^2(\Omega))}^2 \|\nabla\bu^{n+1}\|_{L^2}^2 
   \, + \,\varepsilon_{10} \Delta t \|\nabla\bLambda_{\bu,h}^{n+1}\|_{L^2}^2,
\end{align*}
and
\begin{align*}
2\,\Delta t\, Ri \left(\langle 0, \,\theta^{n+1} - \theta^{n} \rangle, \,\bLambda_{\bu,h}^{n+1}\right)
& 
\leq 2\,\Delta t\, Ri \|\theta^{n+1} - \theta^{n}\|_{L^2}C_P \|\nabla\bLambda_{\bu,h}^{n+1}\|_{L^2}\\
& \leq \frac{C_P^2\, Ri^2 \Delta t^2}{\varepsilon_{11}} \|\theta_t\|^2_{L^2(t^n, t^{n+1};L^2(\Omega))}
	\, + \,\varepsilon_{11} \Delta t\|\nabla\bLambda_{\bu,h}^{n+1}\|_{L^2}^2,
	\\
2\,\Delta t\,\left(\bu_{t}^{n+1}- \frac{{\bu}^{n+1} -\bu^n}{\Delta t} , \,\,\bLambda_{\bu,h}^{n+1}\right)
   \, & \leq 2\,\Delta t \, \Delta t^{1/2} \|\bu_{tt}\|_{L^2(t^n, t^{n+1};L^2(\Omega))}\, C_P \, \|\nabla\bLambda_{\bu,h}^{n+1}\|_{L^2}\\
   \,&\leq \, \,\frac{C_P^2 \, \Delta t^2}{ \varepsilon_{12}}\,\|\bu_{tt}\|_{L^2(t^n, t^{n+1};L^2(\Omega))}^2
    \, + \, \varepsilon_{12} \Delta t\|\nabla\bLambda_{\bu,h}^{n+1}\|_{L^2}^2 .
\end{align*}
Plugging these estimates into $E_{1}(\bu, \theta, \bLambda_{\bu,h}^{n+1})$ yields 
\begin{gather*}
2\,\Delta t\, E_{1}(\bu, \theta, \bLambda_{\bu,h}^{n+1}) \\
\, \leq \, { \Delta t^2}\left(
\,  \frac{C}{\varepsilon_{10}}\|\nabla\bu^{n+1}\|_{L^2}^2 \|\nabla \bu_{t}\|_{L^2(t^n, t^{n+1};L^2(\Omega))}^2 
 + \frac{C_P^2 \,Ri^2}{\varepsilon_{11}}\|\theta_t\|_{L^2(t^n, t^{n+1};L^2(\Omega))}^2
 \, + \,\frac{C_P^2 \,}{\varepsilon_{12}} \|\bu_{tt}\|_{L^2(t^n, t^{n+1};L^2(\Omega))}^2
	\right)\\
   \, + \, \left( \varepsilon_{10} + \varepsilon_{11} + \varepsilon_{12}\right)\,  \Delta t \, \|\nabla\bLambda_{\bu,h}^{n+1}\|_{L^2}^2.
\end{gather*}
\textbf{Step 3:} \textbf{[}\textbf{\textit{The application of the Gronwall Lemma.}} \textbf{]}\, \vspace{2mm}\\
Insert these bounds on the right hand side of \eqref{errvel3} along with the appropriate choice of $\varepsilon_i, i=1,...,12$. Then using Lemma~\ref{yarlemmaBouss} produces \\
\begin{align}
	& \|\bphi_{\bu,h}^{n+1}\|^2_{L^2}  - \|\bphi_{\bu,h}^{n}\|^2_{L^2}
	\, + \, \beta\left( \|\nabla\cdot\bphi_{\bu,h}^{n+1}\|^2_{L^2} - \|\nabla\cdot\bphi_{\bu,h}^{n}\|^2_{L^2}\right)
	\, + \, \|\bLambda_{\bu, h}^{n+1} - \bphi_{\bu,h}^{n+1}\|^2_{L^2}
	\, + \, \|\bLambda_{\bu,h}^{n+1} - \bphi_{\bu,h}^{n} \|^2_{L^2}
	\nonumber\\
	\vspace{10mm}
	 & \,  \, \, \,\,\,\,\,\, \,\hspace{14mm} + \, \frac{\beta}{2}\|\nabla\cdot(\bphi_{\bu,h}^{n+1}-\bphi_{\bu,h}^{n})\|^2_{L^2}
	  \, + \, \gamma\, \Delta t \, \|\nabla\cdot \bphi_{\bu,h}^{n+1}\|^2_{L^2} 
	\, + \,\nu\, \Delta t\, \|\nabla\bLambda_{\bu,h}^{n+1}\|^2_{L^2}
	\nonumber\\
	\vspace{22mm}
	&   \,    \leq  
	{C\, \nu^{-1}}\, \Delta t \, \|\bfeta_{\bu,t}\|^2_{L^2(t^n,\, t^{n+1}; H^{-1}(\Omega))}
	\, + C\,\Delta t \left(\, \nu + \nu^{-1} \|\nabla\bu^n\|^2\,  + \, \gamma \,\right)\,\| \nabla\bfeta_{\bu}^{n+1}\|^2_{L^2}\, \,\,\, \vspace{2cm}
	\nonumber\\
	\vspace{22mm}
	 & \, \,\,\, + \, {C\, \nu^{-1}\, \Delta t\, }\|\nabla\bu^{n+1}\|^2_{L^2} \|\bfeta_{\bu}^{n}\|_{L^2}\|\nabla\bfeta_{\bu}^{n}\|_{L^2}   
	\, + \,  C\,\nu^{-1}\, \Delta t \,\bigg(\|\bu^{n+1}\|^2_{\bm H^3(\Omega)} + h^{-1}\|\nabla\bfeta_{\bu}^{n+1}\|^2\bigg)\|\bphi_{\bu,h}^{n}\|_{L^2}^2
	\nonumber\\
	\vspace{22mm}
	 & \,\,\,\,  + \, { C\, \nu^{-1} \, \Delta t}\| \bfeta_{\bu}^{n}\|_{L^2}\|\nabla \bfeta_{\bu}^{n+1}\|_{L^2}^3
	 \, + \, C\,\nu^{-1}\, \Delta t \, \inf\limits_{q_h\in Q_h}\|p^{n+1}- q_h\|_{L^2}^2 			\nonumber\\
	 \vspace{15mm}
	 & \, \,\,\,  +\,C\,Ri^2 \, C_P^2 \,\nu^{-1}\,\Delta t\,   \bigg( \, \|\eta_{\theta}^{n}\|_{L^2}^2 + \|\Lambda_{\theta,h}^{n}\|_{L^2}^2 \, \bigg)	 
	  \, + \,  \beta\,(1 + 2\,\Delta t\,)\|\nabla\bfeta_{\bu, t}\|^2_{L^{2}(t^n, t^{n+1}; L^2(\Omega))}
	  \nonumber\\
	  \vspace{4mm}
	  & \, \,\,\, + \, {C \, \nu^{-1} \, \Delta t^2}\left(
	   \|\nabla\bu^{n+1}\|_{L^2}^2 \, \|\nabla\bu_t\|^2_{L^2(t^n, t^{n+1};L^2(\Omega))}
	   \, + \, C_P^2\, \|\bu_{tt}\|^2_{L^2(t^n, t^{n+1};L^2(\Omega))}
	  \, + \, C_P^2 \, Ri^2\,\|\theta_{t}\|^2_{L^2(t^n, t^{n+1};L^2(\Omega))}
	  \right)
	  \nonumber\\
	 \vspace{10mm}
	 & \,\, \, \,  + \, d\, \beta \, (\, 1 + \, 2 \, \Delta t \, )\|\nabla\bfeta_{\bu, t}\|^2 \, + \,\beta\, \Delta t\|\nabla\cdot\phi_{\bu,h}^n\|^2_{L^2}
	 \label{errvel4} .
	\end{align}
Similarly, we bound the terms on the right hand side of \ref{errtemp3} as follows
\begin{multline}
\|\Lambda_{{\theta},h}^{n+1}\|^2_{L^2} - \|\Lambda_{{\theta},h}^{n}\|^2_{L^2}
	\, + \, \|\Lambda_{{\theta},h}^{n+1} - \Lambda_{{\theta},h}^{n} \|^2_{L^2}
	\, + \, \kappa \,\Delta t \|\nabla\Lambda_{{\theta},h}^{n+1}\|^2_{L^2}\\
	\leq  
	 C\, \kappa^{-1}\Delta t\, \|\eta_{\theta,t}\|^2_{L^2(t^n,\, t^{n+1}; H^{-1}(\Omega))}  
	 \, + C \, \kappa \,\Delta t \,\| \nabla\bfeta_{\theta}^{n+1}\|^2_{L^2}
	\, + \, C\, \kappa^{-1}\, \Delta t\,\|\nabla\theta^{n+1}\|^2_{L^2} \|\bfeta_{\bu}^{n}\|_{L^2}\|\nabla\bfeta_{\bu}^{n}\|_{L^2} \\
	\, + \, C \, \kappa^{-1}\, \Delta t\, \bigg(\|\theta^{n+1}\|_{H^3}^2 + h^{-1}\|\nabla\bfeta_{\theta}^{n+1}\|_{L^2}^2 \bigg)\|\bphi_{\bu,h}^{n}\|_{L^2}^2 
 	\,+ \,C \kappa^{-1} \Delta t\|\bfeta_{\bu}^{n}\|_{L^2}\|\nabla\bfeta_{\bu}^{n}\|_{L^2}\|\nabla \eta_{\theta}^{n+1}\|_{L^2}^2
 	 \\
 	\, + \, C \kappa^{-1} \Delta t\|\nabla\bu^{n}\|_{L^2}^2 \|\nabla\eta_{\theta}^{n+1}\|_{L^2}^2 
 	\, + \,{C\, \kappa^{-1}}\Delta t^2\bigg(\|\theta_{tt}\|^2_{L^2(t^n, t^{n+1};L^2(\Omega))}
	  \, + \, \|\nabla\theta^{n+1}\|_{L^2}^2 \, \|\nabla\bu_t\|^2_{L^2(t^n, t^{n+1};L^2(\Omega))}\bigg).
	  \label{errtemp4}
 \end{multline}
Drop the non-negative the fourth, fifth and sixth left hand side terms of \eqref{errvel4} and the third left hand side term on \eqref{errtemp4}. Next use the regularity assumptions on Boussinesq solution, and sum over time steps. This produces
\begin{align}
	&\|\bphi_{\bu,h}^{N}\|^2_{L^2} 
	\,  + \, \beta \|\nabla\cdot\bphi_{\bu,h}^{N}\|^2_{L^2} 
	 \, +  \, \Delta t\sum_{n=0}^{N-1}\left(\gamma\|\nabla\cdot \bphi_{\bu,h}^{n+1}\|^2 _{{L^2}}
	\, + \,\nu \|\nabla\bLambda_{\bu,h}^{n+1}\|^2_{{L^2}} \right)\nonumber\\
	& \leq  
	 C \,\Delta t \sum_{n=0}^{N-1} \bigg[\nu^{-1}
	\, \left( \, \|\bu^{n+1}\|^2_{H^3} + h^{-1}\|\nabla \bfeta_{\bu}^{n+1}\|_{L^2}^2\, \right) \|\bphi_{\bu,h}^{n}\|_{L^2}^2
	 \, + \,  \beta   \|\nabla\cdot\bphi_{\bu,h}^{n}\|_{L^2}^2
	 + Ri^2 \, C_P^2 \, \|\Lambda_{\theta,h}^{n} \|^2_{L^2}\bigg] \nonumber\\
	 &  + C \, \Delta t \sum_{n=0}^{N-1}\bigg[\nu^{-1} \|\eta_{\bu, t}\|^2_{L^2(t^n, t^{n+1}; H^{-1}(\Omega))}
	\, + \,\big(\,\nu \, + \nu^{-1} \, \|\nabla\bu\|_{L^{\infty}(0, T; L^2(\Omega))}^2\, + \gamma \, \big)\|\nabla\eta_{\bu}^{n+1}\|_{L^2}^2 \nonumber\\
	& \,  + \, \, \nu^{-1}\|\nabla\bu\|_{L^{\infty}(0, T; L^2(\Omega))}^2 \|\nabla\eta_{\bu}^n\|^2 
	+ \nu^{-1}\, \| \bfeta_{\bu}^{n}\|_{L^2}\|\nabla \bfeta_{\bu}^{n+1}\|_{L^2}^3
	\, + \,\nu^{-1}\, Ri^2 \, C_P^2 \, \|\eta_{\theta}^n\|^2 + \,\, \nu^{-1} \inf\limits_{q_h\in Q_h}\|p^{n+1}- q_h\|_{L^2}^2 \bigg]\nonumber\\
	  & \,  + \, {C \, \nu^{-1} \, \Delta t^2}\left(
	   \|\nabla\bu\|_{L^{\infty}(0, T; L^2(\Omega))}^2 \, \|\nabla\bu_t\|^2_{L^2(0, T; L^2(\Omega))}
	   \, + \, C_P^2\, \|\bu_{tt}\|^2_{L^2(0, T;L^2(\Omega))}
	  \, + \, C_P^2 \, Ri^2\,\|\theta_{t}\|^2_{L^2(0, T; L^2(\Omega))}
	  \right)
	  \nonumber\\
	  \, &  + C\sum_{n=0}^{N-1} \, d \, \beta\, (\, 1 \, + 2 \, \Delta t\, )\|\nabla\eta_{\bu, t}\|^2_{L^2} \, 
	 \, +  \, \|\bphi_{\bu,h}^{0}\|^2_{L^2} 
	\, + \, \beta \|\nabla\cdot\bphi_{\bu,h}^{0}\|^2_{L^2}
	  \label{errvel5},
\end{align}
and
\begin{align}
\|\Lambda_{{\theta},h}^{N}\|^2_{L^2} & + \, \kappa \,\Delta t \sum_{n=0}^{N-1}\|\nabla\Lambda_{{\theta},h}^{n+1}\|^2_{L^2}\nonumber\\
	\leq 
	& \, C \Delta t\sum_{n=0}^{N-1}\kappa^{-1} \bigg[\|\theta^{n+1}\|_{\bm H^3}^2 
 	\, + \, h^{-1} \, \|\nabla\eta_{\theta}^{n+1}\|_{L^2}^2\bigg]\|\bphi_{\bu,h}^{n}\|_{L^2}^2 \nonumber\\
	& \, + \,C \Delta t\sum_{n=0}^{N-1}\bigg(\kappa^{-1}\|\eta_{t,\theta} \|^2_{L^2(t^n, t^{n+1}, H^{-1}(\Omega))} + \kappa\|\nabla\eta_{\theta}^{n+1}\|^2_{L^2} 
		+ \kappa^{-1}\|\nabla\theta\|_{L^{\infty}(0, T; L^2(\Omega))}^2 \|\nabla\bfeta_{\bu}^{n}\|_{L^2}^2 
		\nonumber\\
		&\, \, \, \,\,\,\, \hspace{20mm} + \kappa^{-1}\|\bfeta_{\bu}^{n}\|_{L^2}\|\nabla\bfeta_{\bu}^{n}\|_{L^2}\|\nabla\eta_{\theta}^{n+1}\|_{L^2}^2
 	\, + \,  \kappa^{-1}\, \|\nabla{\bu}^{n}\|_{L^2}^2 \|\nabla\bfeta_{\theta}^{n+1}\|_{L^2}^2 \bigg)
 	\nonumber\\
 	& \, + \,{C\, \kappa^{-1}}\Delta t^2 
 	\left( 
 		\|\nabla\theta\|_{L^{\infty}(0, T; L^2(\Omega))}^2 \, \|\nabla\bu_t\|^2_{L^2(0, T; L^2(\Omega))}
	   \, + \,  \|\theta_{tt}\|^2_{L^2(0, T;L^2(\Omega))}
 	\right) 
 	\, + \, \|\Lambda_{{\theta},h}^{0}\|^2_{L^2}.\label{errtemp5}
 \end{align}
Add \eqref{errtemp5} to \eqref{errvel5}, \textcolor{red}{assume that $h\leq 1$} and notice that $\bphi_{\bu,h}^{0}= 0,\, \,\Lambda_{\theta,h}^{0}= 0.$ Then apply Gronwall Lemma which yields:
\vspace{1mm}
\begin{align}
	\|\bphi_{\bu,h}^{N}\|^2_{L^2}  + \, \beta \|\nabla\cdot\bphi_{\bu,h}^{N}{\|}^2_{L^2} \, + \, \|\Lambda_{{\theta},h}^{N}\|^2_{L^2}
	\, + \, \gamma\||\nabla\cdot \bphi_{\bu,h}|\|^2_{2,0} 
	\, + \,\nu \||\nabla\bLambda_{\bu,h}|\|^2_{2,0} 
	+ \, \kappa  \||\nabla\Lambda_{{\theta},h}|\|^2_{2,0}
	\nonumber\\
	\leq
	C \, \bigg[ (\, \nu^{-1}+ \kappa^{-1} \, ) h^{2k+2} + (\, \nu  + \nu^{-1} +\kappa + \kappa^{-1}\,)\, h^{2k} + (\nu^{-1}+ \kappa^{-1}) h^{4k+1} +	 \beta\,(1 + 2\,\Delta t\,)h^{2k} +(\nu^{-1}+ \kappa^{-1})\Delta t^2\bigg]
\label{gronwallapplied}.
\end{align}
\textbf{Step 4:}  \textbf{[}\textbf{\textit{The completion of proof.}} \textbf{]} \, \vspace{1mm}\\
The application of the triangle inequality to all error terms gives
\begin{gather}
\|e_{\bu}^{N}\|^2_{L^2} 
	\, + \, \|e_{\widetilde{\theta}}^{N}\|^2_{L^2}
	\, + \, \beta \|\nabla\cdot \be_{\bu}^{N}\|^2_{L^2} 
	\, + \, \gamma\||\nabla\cdot \be_{\bu}|\|^2_{2,0}
	\, + \,\nu \||\nabla \be_{\widetilde{\bu}}|\|^2_{2,0}
	\, + \,	\kappa\||\nabla e_{\widetilde{\theta}}|\|^2_{2,0}\nonumber\\
	\, \leq \,
	2\bigg(\|\bfeta_{\bu}^{N}\|^2_{L^2}  
	\, + \, \|\eta_{\theta}^{N}\|^2_{L^2}
	\, + \, \beta \|\nabla\cdot\bfeta_{\bu}^{N}\|^2_{L^2} 
	\, + \, \gamma\||\nabla\cdot \bfeta_{\bu}|\|^2_{2,0}
	\, + \,\nu\||\nabla\bfeta_{\bu}|\|^2_{2,0}
	\, + \,	\kappa\,\||\nabla\eta_{\theta}|\|^2_{2,0}
	 \bigg) \nonumber\\
	\, + \, 2\, \bigg( \|\bphi_{\bu,h}^{N}\|^2_{L^2}
	\, + \,\|\Lambda_{\theta,h}^{N}\|^2_{L^2}
	\, + \, \beta \|\nabla\cdot\bphi_{\bu,h}^{N}\|^2_{L^2}
	\, + \, \gamma\||\nabla\cdot \bphi_{\bu,h}|\|^2_{2,0}
	\, + \,\nu\||\nabla\bLambda_{\bu,h}|\|^2_{2,0}
	\, + \,	\kappa\||\nabla\Lambda_{\theta,h}|\|^2_{2,0}\nonumber
	 \bigg). 
\end{gather}
Finally, using the regularity assumptions on the Boussinesq solutions, approximation properties, and estimate \eqref{gronwallapplied} finishes the proof.
\end{proof}
\section*{Numerical Experiments}
This section presents two numerical experiments to test the predicted convergence rates of the previous section, and illustrate the reliability of Algorithm~\ref{algBouss}. All tests are implemented using FreeFem++ \cite{Freefem}. 
\subsection{Convergence Rate Verification} 
With the use of the finite element spaces $(\bm P_2, P_1, P_2)$ for the velocity/pressure/temperature, respectively, Theorem~\ref{convBouss} predicts second order convergence in space. To illustrate this, we choose a test problem with analytical solutions
\begin{align*}
u(x, t)&=\left[%
\begin{array}{c}
\cos(\pi (y-t)) \nonumber\\
\sin(\pi (x+t))%
\end{array}%
\right]\exp(t), \hspace{2mm}
p(x, t)=\sin(x+y)(1+t^2), \hspace{2mm}T(x, t)=\sin(\pi x)+y\exp(t),
\end{align*}
on the unit square $(0,1)^2$ with $\nu = 1,\,\, Ri =1,\,\, \kappa=1,$ and stabilization parameters $\gamma=1.0$ and $\beta=1.0$. The forcing terms $\bf, \Psi$ are calculated from $\bu, p, \theta$ and the Boussinesq equations. Then, we compute solutions to Algorithm~\ref{algBouss} on a series of refined mesh by choosing small end time $T = 0.001$ and time step $\Delta t=0.0001$ in order to isolate the spatial errors.\\
The results are presented in Table~\ref{BousSpatialVelErr}, and are consistent with our theoretical findings for convergence rates. \\
We next test the predicted temporal rates using the same test problem. We run our method on a fixed mesh with mesh size $h=1/64$ with a series of timestep sizes. The computed errors and rates are presented in Table~\ref{BousTemporalVelErr}, and we observe the predicted optimal rates. Note that the divergence error is always small here, due to the finer mesh and the stabilization. Since this error is already as small as linear solver error, we do not expect convergence rates (as it has already converged).
\begin{table}[h!]
\centering
\captionsetup{width=.9\textwidth, font=scriptsize}
\caption{\label{BousSpatialVelErr} \small Spatial velocity errors and rates for a fixed end time ${T=0.001}$, a time step ${\Delta t=0.0001}$.}
 \begin{adjustbox}{width=0.9\textwidth}
  \begin{tabular}{ c  cl  ll  ll  ll }
  \hline
    \hline
    $h$ & $\||\bu -\bu_h\||_{\infty, 0}$ & Rate & $\||\nabla\cdot(\bu -\bu_h)\||_{\infty, 0}$ & Rate & $\||\nabla\cdot(\bu -\bu_h)\||_{2, 0}$ & Rate & $\||\nabla({\bu} -\widetilde{\bu}_h)\||_{2, 0}$ & Rate \\ 
    \hline 
    $ 1/4 $ & $ 1.9978e-2 $ & -- & $ 1.9001e-5 $ & -- & $ 1.1808e-7 $ & -- & $ 2.7976e-3 $ & -- \\
     
    $ 1/8 $ & $ 2.5644e-3 $ & 2.9618 & $ 5.5915e-6  $ & 1.7648 & $ 3.4854e-8 $ & 1.7604 & $ 7.1600e-4 $ & 1.9662 \\
   
    $ 1/16 $ & $ 3.2267e-4 $ & 2.9904 & $ 1.2480e-6 $ & 2.1636 & $ 7.7778e-9  $ & 2.1639 & $ 1.7974e-4 $ & 1.9940 \\
    
    $ 1/32 $ & $ 4.0400e-5 $ & 2.9976 & $ 2.8658e-7 $ & 2.1226 & $ 1.7864e-9 $ & 2.1222 & $ 4.4702e-5 $ & 2.0075  \\
     
    $ 1/64 $ & $ 5.0521e-6 $ & 2.9994 & $ 6.0716e-8 $ & 2.2388 & $ 3.8071e-10 $ & 2.2388 & $ 1.0964e-5 $ & 2.0274 \\
    \hline
    \hline 
  \end{tabular}
 \end{adjustbox}
\end{table} 

\begin{table}[h!]
\centering
\captionsetup{width=.9\textwidth, font=scriptsize}
\caption{\label{BousTemporalVelErr} \small  Temporal velocity errors and rates on a fixed mesh size $h=1/64$.}
 \begin{adjustbox}{width=.9\linewidth}
  \begin{tabular}{ c  cl  ll  ll  ll }
    \hline
    \hline
    $\Delta t$ & $\||\bu -\bu_h\||_{\infty, 0}$ & Rate & $\||\nabla\cdot(\bu -\bu_h)\||_{\infty, 0}$ & Rate & $\||\nabla\cdot(\bu -\bu_h)\||_{2, 0}$ & Rate & $\||\nabla({\bu} - \widetilde{\bu}_h)\||_{2, 0}$ & Rate \\ 
    \hline 
    $ {1}/4 $ & $ 4.1914e-2 $ & --- & $ 3.3464e-8 $ & --- & $ 2.1720e-8 $ & --- & $ 2.4817e-1 $ & ---  \\ 
    $ {1}/8 $ & $ 2.7726e-2 $ & 0.5962 & $ 3.6204e-8 $ &--- & $ 2.1113e-8 $ &---  & $ 1.4157e-1 $ &0.8098 \\ 
    $ {1}/16 $ & $ 1.5418e-2 $ & 0.8467 & $ 4.3557e-8 $ & --- & $ 2.5392e-8 $ & --- & $ 7.3314e-2 $ & 0.9494 \\ 
    $ {1}/32 $ & $ 8.0705e-3 $ & 0.9339 & $ 6.3380e-8 $ & --- & $ 3.8401e-8 $ & --- & $ 3.7046e-2 $ & 0.9848 \\
    $ {1}/64$ & $ 4.1217e-3 $ & 0.9694 & $ 1.0736e-8 $ & --- & $ 6.4309e-8 $ & --- & $ 1.8595e-2 $ & 0.9944 \\
    \hline
    \hline
  \end{tabular}
  \end{adjustbox}
  \label{tablevel1}
\end{table}
\subsection{Error comparison for a test problem with larger pressure.}
Our second numerical experiment focuses on the pressure robustness of the proposed algorithm. One important advantage of grad-div stabilization is that the stabilization parameter $\gamma$ with the appropriate selection reduces the negative impact of the continuous pressure on the velocity error. To test this for the proposed method, a similar test problem and set up for the $2d$-convergence rate test are used, but fixing end time and time step to $T=0.01$, $\Delta t=T/8$ We take the dimensionless kinematic viscosity $Pr = 1.0, Ra=100$ and true pressure solution as 
$$\,p(x,y)= 1000 \sin ({x+2y}),$$
\textcolor{red}{and varying the stabilization parameters $\beta=0, 0.2, 1.0$. Then, we run Algorithm~\ref{algBouss} and the standard grad-div stabilization method for varying $\gamma=10^{k}, k=-1,\,  0,\, 1,\, ...,\, 5$ on successively refined meshes. Our calculations reveal that both methods give (quasi-) optimal errors when $\gamma=10^5$, $\, \beta=0$. The results are presented in Table~\ref{IncPres1}, and we observe that with quasi-optimal parameter choices, the proposed method performs just as well as grad-div stabilization with $L^2(H^1)$-errors and provides better divergence errors.}
\begin{table}[h!]
\centering
\captionsetup{width=\textwidth, font=scriptsize}
\caption{ Velocity errors and divergence of the non-stabilized, the standard grad-div and modular grad-div methods with $\gamma=10^5$, $\beta = 0.0$ for large pressure.}
\label{IncPres1}
\begin{adjustbox}{width=0.9\textwidth}
\begin{tabular}{l l l l l l l l l l}
\hline
\hline
\multirow{2}{1.5cm}{}&\multicolumn{3}{p{5cm}}{ $\||\nabla(\bu -\bu_h)\||_{2, 0}$ }&\multicolumn{3}{p{5cm}}{ $\||\nabla\cdot(\bu -\bu_h)\||_{2, 0}$ }&\multicolumn{3}{p{5cm}}{ $\|\nabla\cdot\bu_h^N\|_{L^2}$ }\bigstrut 
\\
$h$ & No-stab.& Standard & Modular & No-stab.& Standard & Modular & No-stab.& Standard & Modular
\\
\hline
1/2 & 0.1827 & 2.8140e-2  &  2.8138e-2  & 0.1170 & 5.5878e-6  & 4.3595e-6  & 1.5534 & 5.5895e-5  & 4.3610e-5
\\
1/4  & 0.1734  & 7.2051e-3 & 7.3944e-3  & 0.1676  & 3.2778e-6 & 2.0013e-6 &  1.97535 & 3.2777e-5 & 2.0040e-5
\\
1/8  & 3.1000e-2 & 1.8125e-3  & 1.8811e-3 & 0.0303 & 2.1002e-6 & 1.2088e-7 & 0.3136 & 2.1002e-5  & 1.2139e-6
\\
1/16 & 4.3272e-3 & 4.5537e-4 & 4.5952e-4 & 4.2361e-3 &   2.0248e-6 & 6.2031e-9 & 4.2675e-2 & 2.0248e-5  & 6.2120e-8
\\
1/32 & 5.7400e-4 & 1.1995e-4 & 1.1987e-4 & 5.5360e-4 &   2.0205e-6 & 8.0417e-10 & 5.5457e-3 & 2.0205e-5  & 8.0234e-9
\\
\hline
\hline
\end{tabular}
\end{adjustbox}
\end{table}

\subsection{Error comparison on a fixed mesh with varying Rayleigh numbers}
\textcolor{red}{
We next compare the velocity errors of non-stabilized, standard grad-div and modular grad-div methods by fixing the stabilization parameters $\beta=0$ with varying Rayleigh numbers, $Ra=10^k, k=-1,\, 0, \, 1, \, ..., \, 5$. We use the same velocity, pressure and temperature field solutions and set-up as for $2d$ convergence
rate verification. We fix end time and mesh size to $h=1/32, \,\,\Delta t = 0.1/32.$ The computed errors from these methods reveals that both two methods again give optimal errors and mass conservations for $\gamma=10^5, \beta =0$. From Table~\ref{IncRa1}, we observe that modular grad-div performs better than the standard grad-div with quasi-optimal parameters.}

\begin{table}[h!]
\centering
\captionsetup{width=\textwidth, font=scriptsize}
\caption{ Velocity errors and divergences of the non-stabilized, the standard grad-div and modular grad-div methods for $\gamma=10^5, \beta=0$ with varying $Ra$.}
\label{IncRa1}
\begin{adjustbox}{width=0.9\textwidth}
\begin{tabular}{l l l l l l l l l l}
\hline
\hline
\multirow{2}{1.0cm}{}&\multicolumn{3}{p{5cm}}{ $\||\nabla(\bu -\bu_h)\||_{2, 0}$ }&\multicolumn{3}{p{5cm}}{ $\||\nabla\cdot(\bu -\bu_h)\||_{2, 0}$ } &\multicolumn{3}{p{5cm}}{$\|\nabla\cdot\bu_h\|_{L^2}$}\bigstrut 
\\
$Ra$ & No-stab.& Standard & Modular & No-stab.& Standard & Modular & No-stab.& Standard & Modular
\\
\hline
1  & 8.2819e-4  & 8.2655e-4 & 9.7395e-5 & 3.5958e-6 & 2.2774e-8 & 1.2521e-10 & 1.2409e-5  & 8.238e-8  & 4.5791e-10 
\\
$10$  & 8.8942e-4  & 8.8772e-4 & 1.0701e-4 & 3.6836e-6 & 2.4118e-8  & 1.3331e-10  & 1.2867e-5 & 8.8923e-8 &  4.9666e-10
\\
$10^2$ & 1.5607e-3  & 1.5582e-3  & 2.0482e-4 & 4.8806e-6  & 4.1537e-8 & 2.4328e-10 & 1.8964e-5 & 1.7033e-7 &  1.0027e-9
\\
$10^3$  & 8.2600e-3  & 8.2498e-3  &  1.1175e-3 &  2.1148e-5 &  2.3428e-7 & 1.4569e-9 & 9.0468e-5  & 9.8050e-7  & 6.0459e-9 
\\
$10^4$  & 4.3291e-2  & 4.3255e-2 & 5.4819e-3  & 1.4816e-4 & 1.2905e-6 & 8.0101e-9 & 5.3082e-4 & 4.3849e-6 & 2.7010e-8
\\
$10^5$ & 0.2714 & 2.6895e-1 &  2.2405e-2 & 2.3552e-3  & 1.1264e-5  &  7.5111e-8  &  7.8249e-3 &  3.0637e-5  & 1.9085e-7 
\\
\hline
\hline
\end{tabular}
\end{adjustbox}
\end{table}
%
%

\subsection{Mass conservation and error comparison for different finite element choice with varying $\gamma$}
\textcolor{red}{In this section, we compare errors and the mass conservation of the non-stabilized, the usual grad-div and modular grad-div stabilization by using different finite element spaces for fixed Rayleigh number, $Ra=10^6$ and varying $\gamma=10^k, \,\,k=0,1,2,...,5$. First, we compute solutions for $(\bP_2, P_1, P_2), $ and $(\bP_2, P_0, P_2), $ on the barycenter (bc) refinement of $[0,1]\times [0,1]$, which is created by a $8\time8$ uniform mesh. Next we repeat the calculations for $(\bP_2, P_1, P_2), $ and $(\bP_2, P_0, P_2), $ and $(\bP_{1b}, P_1, P_{1b}).$ on s non barycentred refined mesh. The results are presented in Table~\ref{IncRa2}, and reveals that the proposed method gives much more accurate solutions, especially on non barycentered meshes.}

\begin{table}[h!]
\centering
\captionsetup{width=.9\textwidth, font=scriptsize}
\caption{ Velocity errors and divergences of the non-stabilized, the standard grad-div and modular grad-div methods with varying $\gamma$.}
\label{IncRa2}
\begin{adjustbox}{width=0.9\textwidth}
\begin{tabular}{l l l l l l l l l l }
\hline
\hline
\multirow{2}{0.0cm}{}&\multicolumn{2}{p{3cm}}{  }&\multicolumn{2}{p{4cm}}{ $\||\nabla(\bu-\bu_h)\||_{2,0}$}&\multicolumn{2}{p{4cm}}{ $\||\nabla\cdot(\bu-\bu_h)\||_{2,0}$} &\multicolumn{2}{p{5cm}}{$\|\nabla\cdot\bu_h\|_{L^2}$}\bigstrut 
\\
Element & Mesh & $\gamma$ & Standard & Modular & Standard & Modular & Standard & Modular
\\
\hline
$(\bP_2, P_1, P_2)$ & Bc  & $0$ & 3.1421e-2 &  - - - -& 1.7557e-2 &  - - - -& 0.6894  & - - - -  
\\
$(\bP_2, P_1, P_2)$ & Bc  & $10^0$ & 2.5559e-2 & 3.1477e-4 & 1.2739e-2 &   1.0255e-2 & 0.4931  & 0.3962  
\\
$(\bP_2, P_1, P_2)$ & Bc  & $10$  & 1.3431e-2 & 1.9548e-4 & 4.5858e-3 &   3.0573e-3 & 0.1692  & 0.1117 
\\
$(\bP_2, P_1, P_2)$ & Bc  & $100$ & 8.7456e-3 & 1.6550e-4 & 6.9048e-4  & 4.2361e-4  & 2.4573e-2 & 1.4975e-2
\\
$(\bP_2, P_1, P_2)$ & Bc  & $1000$ & 8.5687e-3 & 1.6381e-4 & 7.3200e-5 & 4.8875e-5  & 2.5907e-3 & 1.7169e-3 
\\
$(\bP_2, P_1, P_2)$ & Bc  & $10000$ & 8.5651e-3 & 1.6360e-4 & 7.3768e-6  & 5.7394e-6  & 2.6052e-4  & 2.0159e-4
\\
$(\bP_2, P_1, P_2)$ & Bc  & $100000$ & 8.5650e-3 & 1.6358e-4 & 7.3996e-7 & 5.9433e-7  & 2.6118e-5 & 2.0877e-5 
\\
\\
$(\bP_2, P_0, P_2)$ & Bc  & $0$ & 3.6389e-2 &   - - - -& 2.5639e-2  &    - - - - & 1.0576 & - - - -  
\\
$(\bP_2, P_0, P_2)$ & Bc  & $1$ & 2.8158e-2  & 4.6986e-4 & 1.8064e-2 &    1.4223e-2 & 0.7092 & 0.5667 
\\
$(\bP_2, P_0, P_2)$ & Bc  & $10$ & 1.3443e-2 & 3.0108e-4 & 5.5204e-3  &   3.8988e-4& 0.1986 & 0.1466
\\
$(\bP_2, P_0, P_2)$ & Bc  & $100$  & 8.7021e-3 & 2.3387e-4 & 7.6563e-4 &   7.8538e-4& 2.6879e-2 & 2.8369e-2
\\
$(\bP_2, P_0, P_2)$ & Bc  & $1000$ & 8.5647e-3 & 1.7950e-4  & 8.0258e-5  & 2.0423e-4 &  2.8069e-3 & 6.9592e-3 
\\
$(\bP_2, P_0, P_2)$ & Bc  & $10000$ & 8.5648e-3 & 1.6786e-4 & 8.0659e-6 & 2.8254e-5 &  2.8197e-4 & 9.4209e-4 
\\
$(\bP_2, P_0, P_2)$ & Bc  & $100000$ & 8.5649e-3 & 1.6745e-4 & 8.0865e-7 & 2.9466e-6 & 2.8257e-5 &  9.7958e-5
\\
\\
$(\bP_2, P_1, P_2)$ & Nbc  & $0$ & 1.7543e-2 &- - - -   & 0.1116 & - - - -  & 2.8909e-3 &  - - - - 
\\
$(\bP_2, P_1, P_2)$ & Nbc & $1$ & 1.7315e-2 &  1.0195e-5& 8.6980e-2 & 6.0606e-6  &   2.2909e-3 & 2.1408e-4 
\\
$(\bP_2, P_1, P_2)$ & Nbc  & $10$ & 1.6249e-2 & 1.0196e-5 & 5.9745e-2 &   1.2259e-6 & 1.5956e-3  &  4.1399e-5
\\
$(\bP_2, P_1, P_2)$ & Nbc & $100$ & 1.1823e-2 & 1.0200e-5 & 3.1815e-2  &    3.5752e-7 & 8.9234e-4 &  1.2709e-5
\\
$(\bP_2, P_1, P_2)$ & Nbc  & $1000$ & 4.5587e-3 & 1.0206e-5 & 7.6105e-3  &   1.0284e-7 & 2.3539e-4 & 3.4295e-6 
\\
$(\bP_2, P_1, P_2)$ & Nbc & $10000$ & 2.7661e-3 & 1.0209e-5 & 9.3819e-4 &   1.4003e-8& 3.0125e-5  & 4.4616e-7
\\
$(\bP_2, P_1, P_2)$ & Nbc  & $100000$ & 2.7216e-3 &  1.0210e-5 & 9.6461e-5 & 1.4570e-9  & 3.1164e-6  & 4.6075e-8 
\\
\\
$(\bP_2, P_0, P_2)$ & Nbc  & $0$ & 2.8755e-2 & - - - - & 2.8909e-3  & - - - - & 0.1117 & - - - -   
\\
$(\bP_2, P_0, P_2)$ & Nbc  & $1$ & 2.4101e-2 & 1.3424e-5 &   1.7096e-2 & 1.8501e-4 & 1.0071  & 8.3547e-3 
\\
$(\bP_2, P_0, P_2)$ & Nbc  & $10$ & 1.6891e-2 & 1.1728e-5 &  5.3273e-3 & 5.1691e-5  & 0.7036 & 2.0768e-3
\\
$(\bP_2, P_0, P_2)$ & Nbc  & $100$ & 1.1847e-2 & 1.0845e-5 &  1.0918e-3 & 1.5535e-5 & 0.1865 & 5.7542e-4
\\
$(\bP_2, P_0, P_2)$ & Nbc  & $1000$ & 4.5718e-3 & 1.0236e-5 &   2.4468e-4 & 4.1319e-6 & 3.7840e-2 & 1.3773e-4
\\
$(\bP_2, P_0, P_2)$ & Nbc  & $10000$ & 2.7663e-3 & 1.0205e-5 & 3.0921e-5 & 5.2044e-7 & 7.9098e-3  & 1.6578e-5
\\
$(\bP_2, P_0, P_2)$ & Nbc & $100000$ & 2.7216e-3 & 1.0210e-5 & 3.1948e-6 & 5.3491e-8 & 9.6458e-4 &  1.6932e-6
\\
\\
$(\bP1b, P_1, P1b)$ & Nbc  & $0$ & 0.4637 &  - - - - & 0.3103  & - - - -  &   13.3382 & - - - -  
\\
$(\bP1b, P_1, P1b)$ & Nbc & $1$ & 0.3714 & 4.0602e-4 & 0.222  & 2.2436e-3  & 9.1670  & 4.7410e-2  
\\
$(\bP1b, P_1, P1b)$ & Nbc  & $10$ & 0.2325  &  4.1253e-4 & 8.4390e-2  &   5.8711e-4 & 3.5145  & 1.1282e-2 
\\
$(\bP1b, P_1, P1b)$ & Nbc  & $100$ & 0.1511 &  4.1728e-4 & 3.4756e-2  &    1.6482e-4 & 1.4964 & 3.5891e-3 
\\
$(\bP1b, P_1, P1b)$ & Nbc  & $1000$ & 5.6745e-2 & 4.3021e-4 & 9.8919e-3 & 6.6669e-5 & 0.3675 & 8.2588e-4 
\\
$(\bP1b, P_1, P1b)$ & Nbc  & $10000$ & 1.4120e-2 & 4.4175e-4 & 1.3482e-3 &   1.6045e-5& 4.3821e-2 & 1.6371e-5
\\
$(\bP1b, P_1, P1b)$ & Nbc  & $100000$ & 1.1276e-2 & 4.4370e-4  & 1.4028e-4  & 2.3152e-6  & 4.4671e-3  &  1.6635e-6
\\
\hline
\hline
\end{tabular}
\end{adjustbox}
\end{table}

\subsection{Marsigli Experiment}
This numerical experiment tests the proposed algorithm and reveals its effectiveness on a physical situation, which was described by Marsigli in 1681. This physical situation demonstrates that when two fluids with different densities meet, a motion driven by the gravitational force is created: the fluid with higher density rises over the lower one. Since the density differences can be modelled by the temperature differences with the help of the Boussinesq approximation, this physical problem is modelled by the incompressible Boussinesq system \eqref{Bouss} studied herein. \\ 
In the problem set-up, we follow the paper \cite{JLW03} of H. Johnston et al. The flow region taken is an insulated box $[0,8]\times[0,1]$ divided at $x=4$. The initial velocity is taken to be zero since the flow is at rest, and the initial temperature on the left hand side of the box is $\theta_0 = 1.5$, and on the right hand side $\theta_0=1.0$. The dimensionless flow parameters are set to be $Re=1,000, \, Ri =4, \, Pr=1$, and the flow starts from rest.\\ 
The first results we present are the direct numerical simulations (DNS) of the Boussinesq equations. We use finite element spaces $(\bP_2, P_1, P_2)$ for the velocity/pressure/temperature, respectively, on a finer, unstructed mesh, which provides $135,642$ velocity dof, $17,111$ pressure dof and $67,821$ temperature dof. All solutions are computed at $T = 2, 4, 8,$ taking a time step $\Delta t=0.025$. Our goal is to compare our scheme with the BE-FE method (i.e. no stabilization), and standard grad-div method with stabilization parameter $1.0$ on this physical problem on coarser meshes then is required by a DNS. In order to realize this aim, these three schemes are solved on the same moderately fine mesh, which gives $26,082$ velocity dof, $3,321$ pressure dof and $13,041$ temperature dof. We imposed homogeneous Dirichlet boundary conditions for the velocity and the adiabatic boundary condition for the temperature, and used $(\bP_2, P_1, P_2)$ for the velocity/pressure/temperature finite element spaces, respectively. All solutions are calculated at $T=2, 4, 8$ taking a time step $\Delta t=0.025$ with the same flow parameters as the DNS. The results are presented in Figures \ref{Coarse1}, \ref{Coarse2} and \ref{Coarse3}. It can be clearly seen that the modular grad-div method catches very well the flow pattern and temperature distribution of the DNS at each time level. Also it gives very similar results to the standard grad-div method (with parameter $1.0$) at each time level. However, the non-stabilized solution creates very poor solutions, and significant oscillations build in temperature and velocity as time progresses. 
\begin{figure}[h!]
\begin{center}
DNS $(T=2)$\\
\vspace{0.05cm}
\includegraphics[width=0.6\textwidth, height=2.25cm]{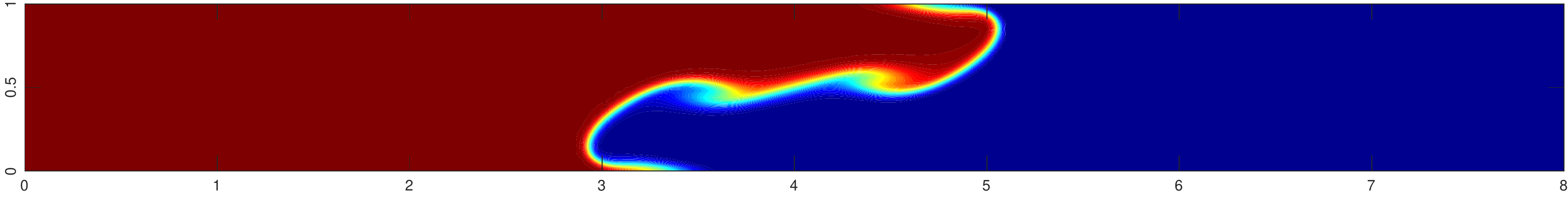}
\includegraphics[width=0.6\textwidth, height=2.25cm]{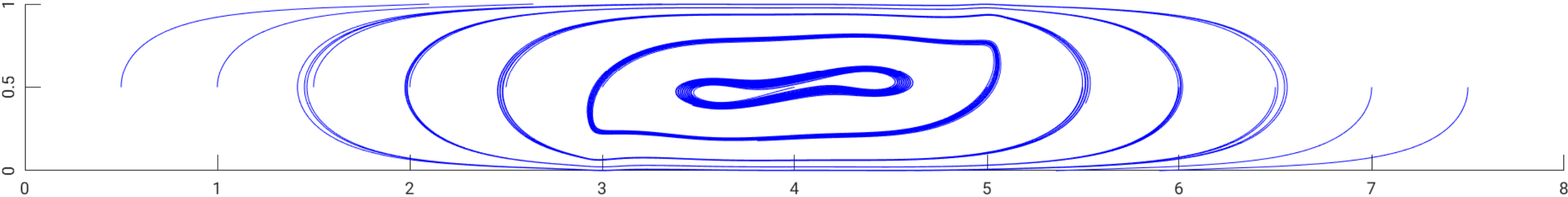}
\\
\vspace{0.1cm}
DNS $(T=4)$\\
\vspace{0.05cm}
\includegraphics[width=0.6\textwidth, height=2.25cm]{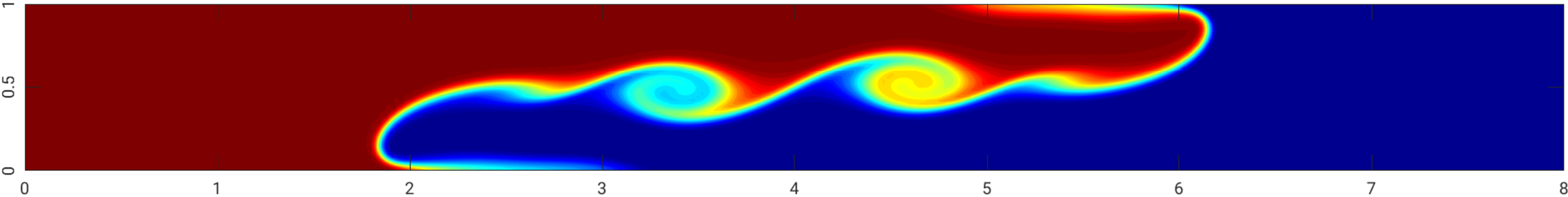}
\includegraphics[width=0.6\textwidth, height=2.25cm]{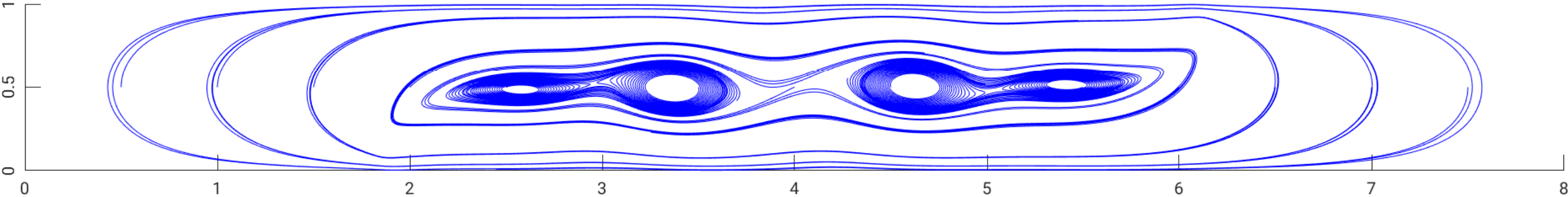}
\\
\vspace{0.1cm}
DNS $(T=8)$\\
\vspace{0.05cm}
\includegraphics[width=0.6\textwidth, height=2.25cm]{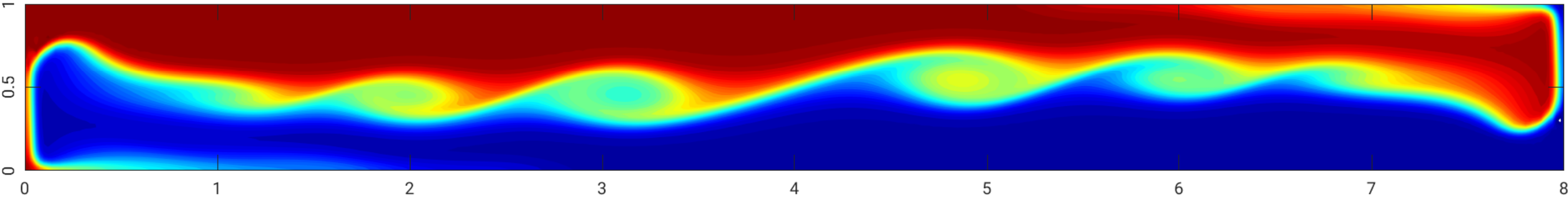}
\includegraphics[width=0.6\textwidth, height=2.25cm]{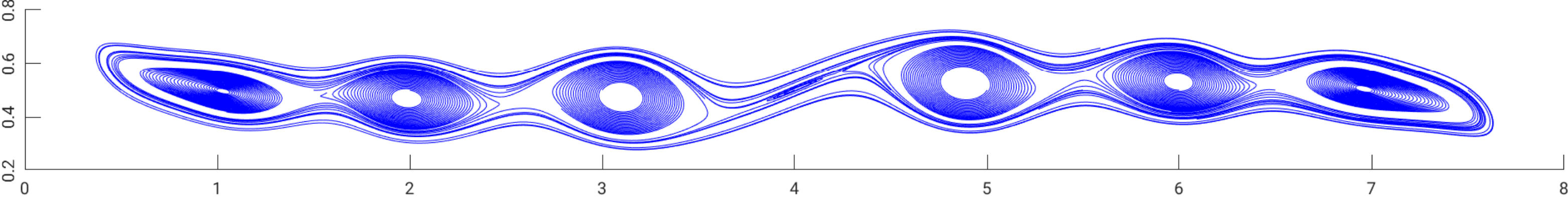}
\end{center}
\caption{The resolved temperature contours and velocity streamlines, from a fine mesh computation at end times $T=2,4,8$ with $\Delta t=0.025,$ $Re = 1,000, Pr =1$, and $ Ri = 4$.}
\label{DNS-BDF2}
\end{figure}
\begin{figure}[h!]
\begin{center}
No-stabilization $(T=2)$\\
\vspace{0.05cm}
\includegraphics[width=0.6\textwidth, height=2.25cm]{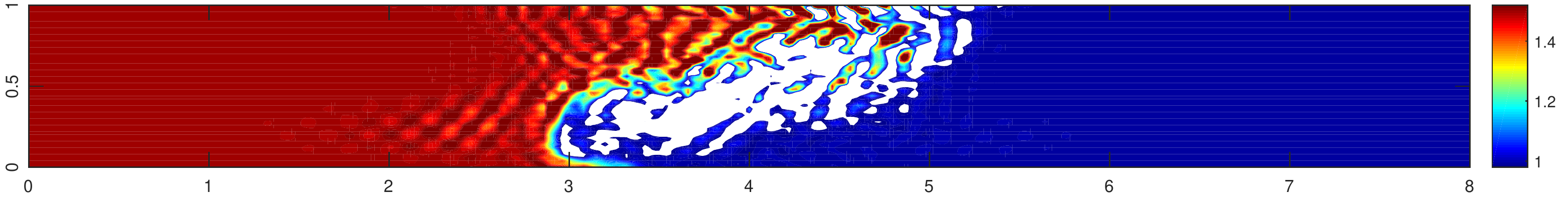}
\\
\vspace{0.2cm}
\includegraphics[width=0.6\textwidth, height=2.25cm]{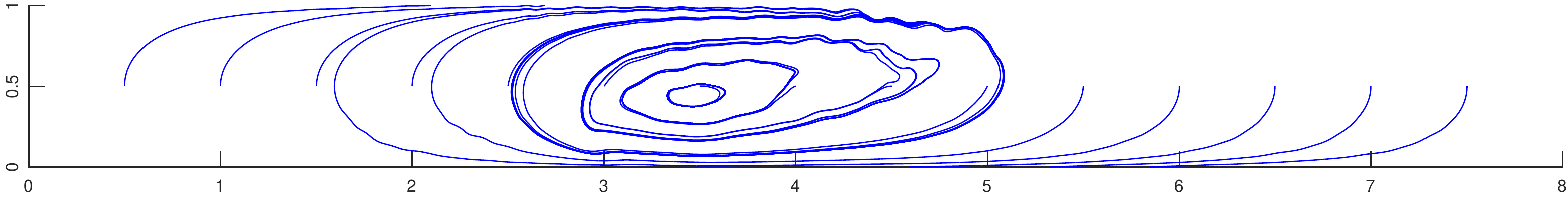}
\\
Usual grad-div stabilization $(T=2)$\\
\vspace{0.2cm}
\includegraphics[width=0.75\textwidth, height=2.25cm]{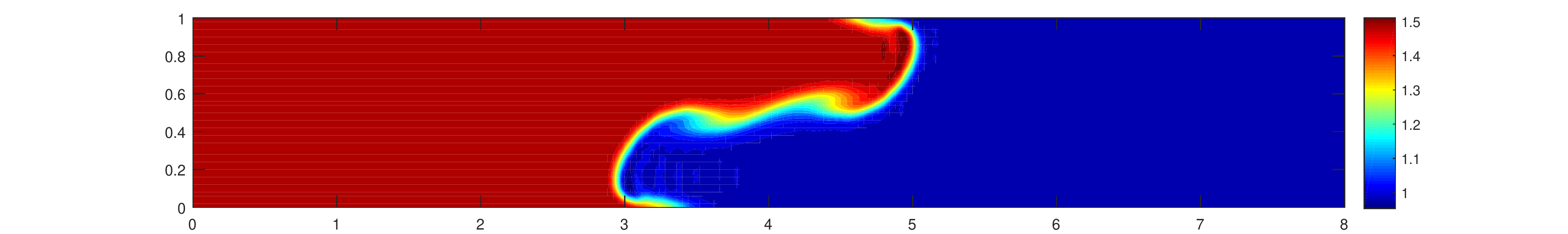}
\\
\vspace{0.2cm}
\includegraphics[width=0.75\textwidth, height=2.25cm]{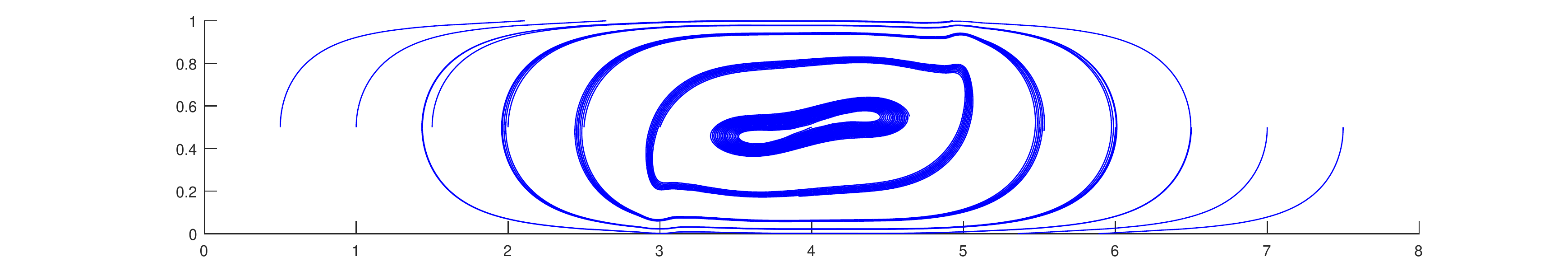}
\\
Modular grad-div stabilization $(T=2)$\\
\vspace{0.2cm}
\includegraphics[width=0.75\textwidth, height=2.25cm]{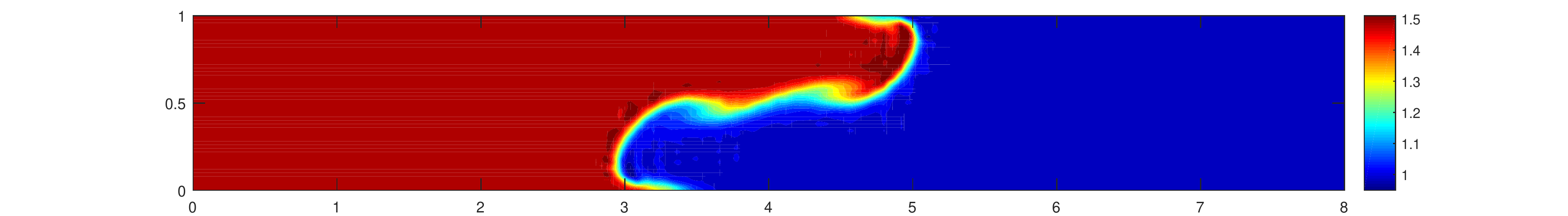}
\\
\vspace{0.2cm}
\includegraphics[width=0.75\textwidth, height=2.25cm]{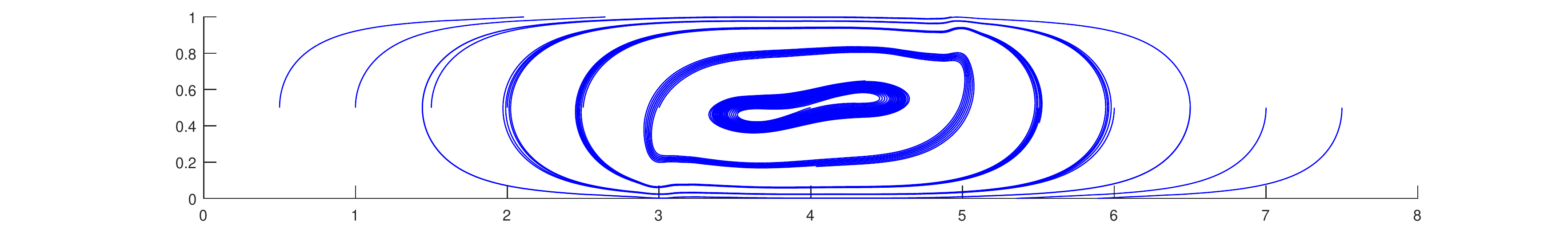}
\\
\vspace{0.1cm}
\end{center}
\caption{The temperature contours and velocity streamlines of BE-FEM (no-stabilization), the usual grad-div, and the modular grad-div, respectively, from a coarse mesh computation at $T=2$ with $\Delta t=0.025$, $Re = 1,000, Pr =1$, and $Ri = 4$.}
\label{Coarse1}
\end{figure}

\begin{figure}[h!]
\begin{center}
No-stabilization $(T=4)$\\
\vspace{0.05cm}
\includegraphics[width=0.6\textwidth, height=2.25cm]{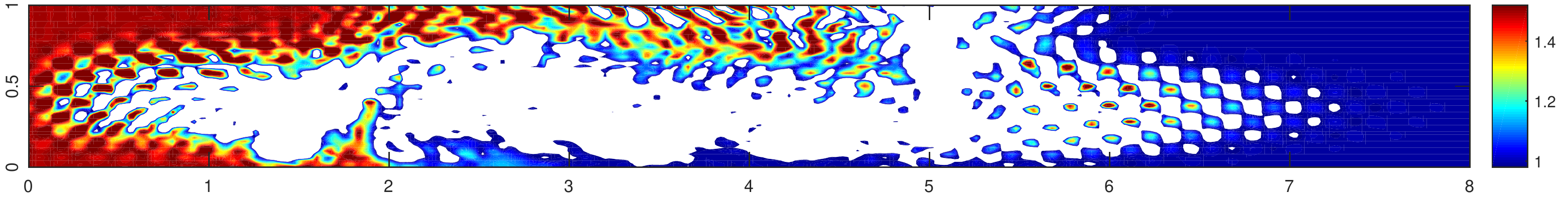}
\\
\vspace{0.2cm}
\includegraphics[width=0.6\textwidth, height=2.25cm]{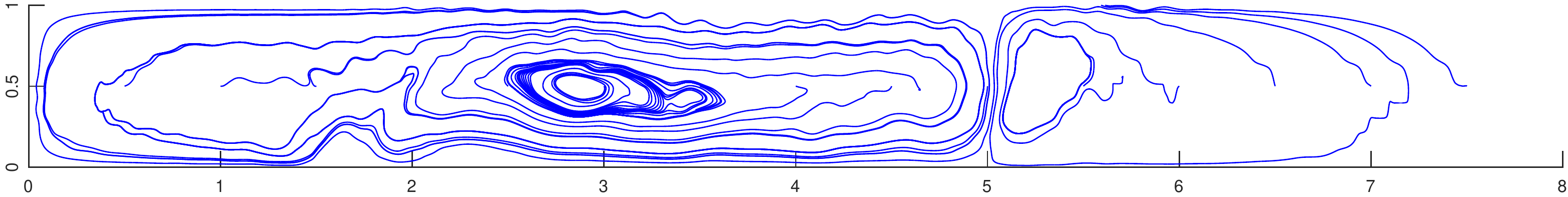}
\\
Usual grad-div stabilization $(T=4)$\\
\vspace{0.2cm}
\includegraphics[width=0.75\textwidth, height=2.25cm]{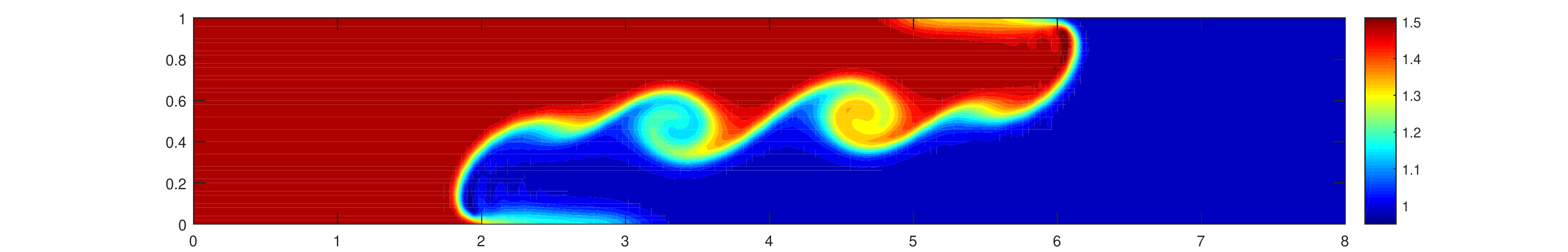}
\\
\vspace{0.2cm}
\includegraphics[width=0.75\textwidth, height=2.25cm]{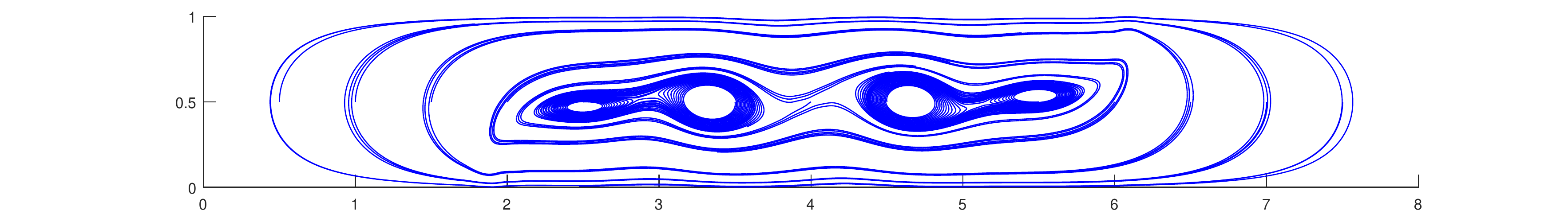}
\\
Modular grad-div stabilization $(T=4)$\\
\vspace{0.2cm}
\includegraphics[width=0.75\textwidth, height=2.25cm]{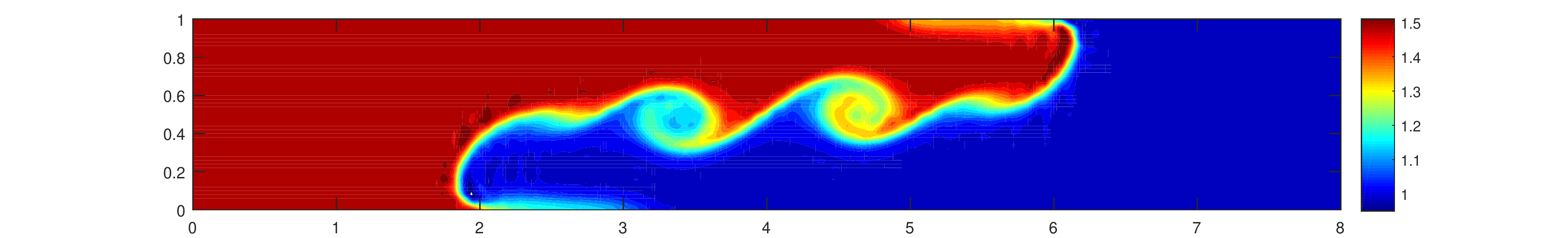}
\\
\vspace{0.2cm}
\includegraphics[width=0.75\textwidth, height=2.25cm]{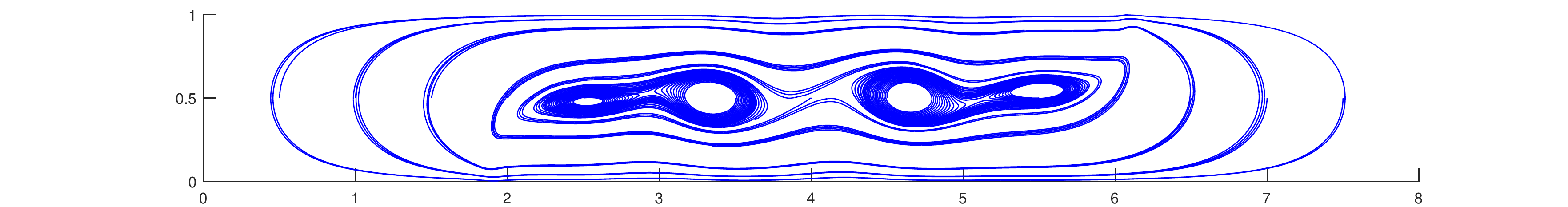}
\\
\vspace{0.1cm}
\end{center}
\caption{The temperature contours and velocity streamlines of BE-FEM (no-stabilization), the usual grad-div, and the modular grad-div, respectively, from a coarse mesh computation at $T=4$ with $\Delta t=0.025$, $Re = 1,000, Pr =1$, and $Ri = 4$.}
\label{Coarse2}
\end{figure}
\begin{figure}[h!]
\begin{center}
No-stabilization $(T=8)$\\
\vspace{0.05cm}
\includegraphics[width=0.6\textwidth, height=2.25cm]{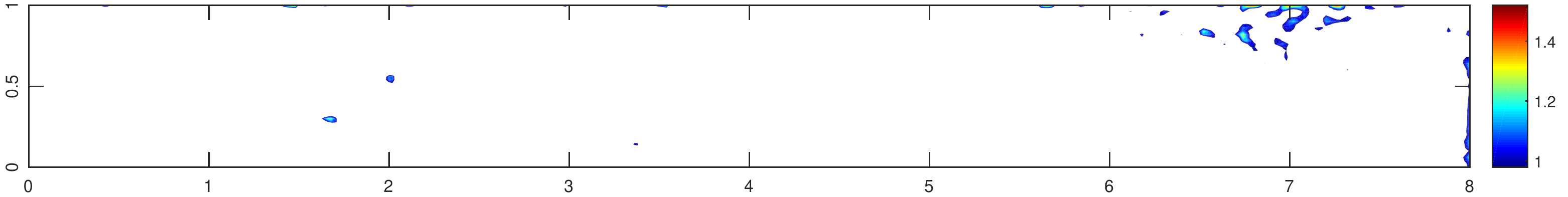}
\\
\vspace{0.2cm}
\includegraphics[width=0.6\textwidth, height=2.25cm]{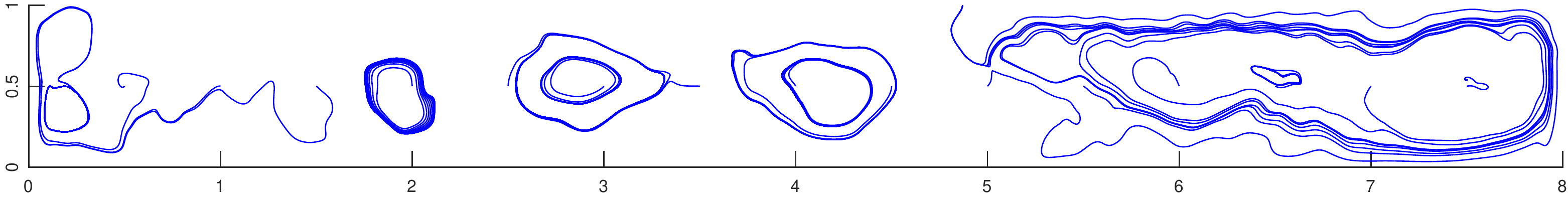}
\\
Usual grad-div stabilization $(T=8)$\\
\vspace{0.2cm}
\includegraphics[width=0.75\textwidth, height=2.25cm]{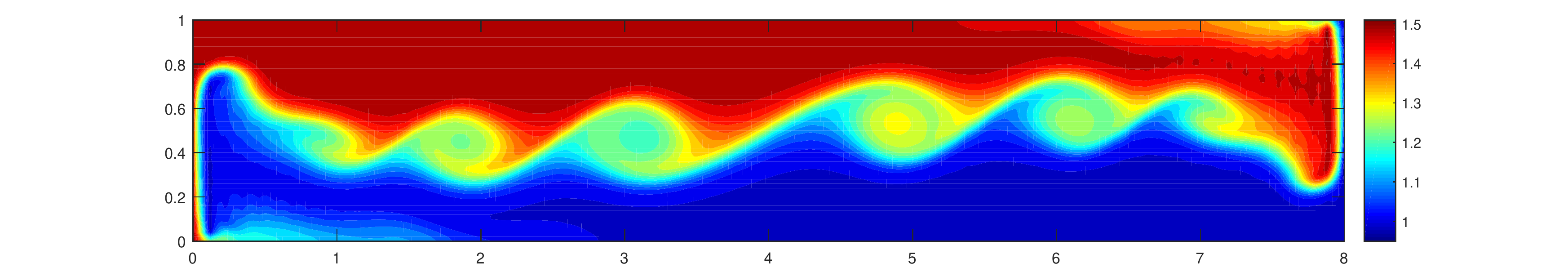}
\\
\vspace{0.2cm}
\includegraphics[width=0.75\textwidth, height=2.25cm]{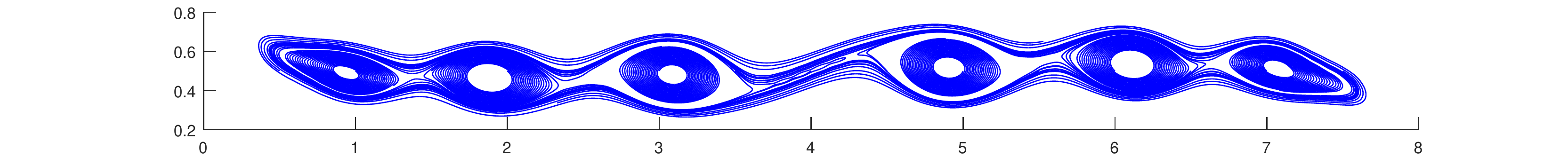}
\\
Modular grad-div stabilization $(T=8)$\\
\vspace{0.2cm}
\includegraphics[width=0.75\textwidth, height=2.25cm]{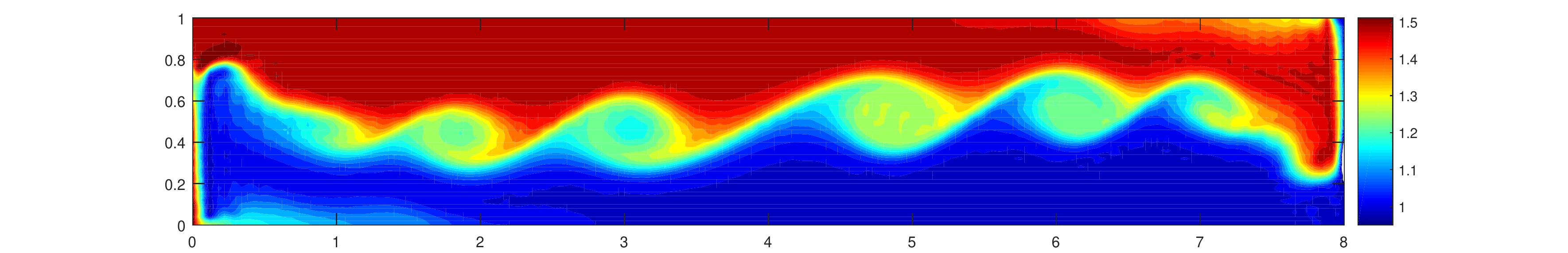}
\\
\vspace{0.2cm}
\includegraphics[width=0.75\textwidth, height=2.25cm]{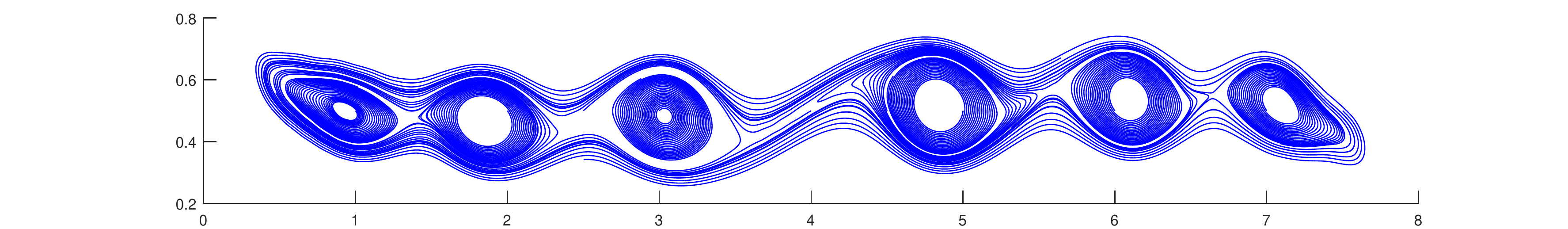}
\\
\vspace{0.1cm}
\end{center}
\caption{The temperature contours and velocity streamlines of BE-FEM (no-stabilization), the usual grad-div, and the modular grad-div, respectively, from a coarse mesh computation at $T=8$ with $\Delta t=0.025$, $Re = 1,000, Pr =1$, and $Ri = 4$.}
\label{Coarse3}
\end{figure}

\section{Conclusions}
This paper proposed, analyzed, and tested modular grad-div stabilization methods in discretization of the Boussinesq flows. Unconditional stability and convergence results are established for the system. Numerical experiments were given that verified the convergence rates derived from finite element error analysis. Also, the reliability and  efficiency of the methods were tested with some numerical experiments. These results reveal that the methods are very accurate when compared to the non-stabilized methods, and have effects on solutions similar to that of standard grad-div stabilization.

\end{document}